\newcommand{\BlackBoxes}{\global\overfullrule5pt}
\newtheorem{theorem}{Theorem}
\theoremstyle{definition}
\newtheorem{example}[theorem]{Example}
\newtheorem{remark}[theorem]{Remark}
\numberwithin{equation}{section} \numberwithin{theorem}{section}
\def\0{\kern0pt\-\nobreak\hskip0pt\relax}
 \def\@serieslogo{%
 \vbox to\headheight{%
 \parindent\z@ \fontsize{6}{7\p@}\selectfont
 \vss}}}
\def\makeoverbar#1#2#3#4#5#6#7{%
 \setbox0=\hbox{$\m@th#2\mkern#5mu{{}#3{}}\mkern#6mu$}%
 \setbox1=\null \dimen@=#4\fontdimen8#13 \dimen@=3.5\dimen@
 \advance\dimen@ by \ht0 \dimen@=-#7\dimen@ \advance\dimen@ by \wd0
 \ht1=\ht0 \dp1=\dp0 \wd1=\dimen@
 \dimen@=\fontdimen8#13 \fontdimen8#13=#4\fontdimen8#13
 \rlap{\hbox to \wd0{$\m@th\hss#2{\overline{\box1}}\mkern#5mu$}}
 \fontdimen8#13=\dimen@}
\def\mylabel#1#2{{\def\@currentlabel{#2}\label{#1}}}
\def\RR{\mathbb{R}}
\def\PP{\mathbb{P}}
\def\NN{\mathbb{N}}
\begin{document}


\makeatletter \providecommand\@dotsep{5} \makeatother

\title[Partially Observable Risk-Sensitive Stopping Problems in Discrete Time]{Partially Observable Risk-Sensitive Stopping Problems in Discrete Time}

\author[N. \smash{B\"auerle}]{Nicole B\"auerle${}^*$}
\address[N. B\"auerle]{Department of Mathematics,
Karlsruhe Institute of Technology, D-76128 Karlsruhe, Germany}

\email{\href{mailto:nicole.baeuerle@kit.edu}
{nicole.baeuerle@kit.edu}}

\author[U. \smash{Rieder}]{Ulrich Rieder${}^\dag$}
\address[U. Rieder]{University of Ulm, D-89069 Ulm, Germany}

\email{\href{mailto:ulrich.rieder@uni-ulm.de} {ulrich.rieder@uni-ulm.de}}


\begin{abstract}
In this paper we consider stopping problems with partial observation under a general risk-sensitive optimization criterion for problems with finite and infinite time horizon. Our aim is to maximize the certainty equivalent of the stopping reward. We develop a general theory and discuss the Bayesian risk-sensitive  house selling problem as a special example. In particular we are able to study the influence of the attitude towards risk of the decision maker on the optimal stopping rule.
\end{abstract}
\maketitle

\vspace{0.5cm}
\begin{minipage}{14cm}
{\small
\begin{description}
\item[\rm \textsc{ Key words}]
{\small optimal stopping,  certainty equivalent, exponential utility, updating operator, value iteration}
\item[\rm \textsc{AMS subject classifications}]{\small Primary 62L15,  Secondary 90C40}
\end{description}
}
\end{minipage}

\section{Introduction}\label{BRsec:intro}
In this paper we consider stopping problems with partial observation under a general risk-sensitive optimization criterion for problems with finite and infinite time horizon. More precisely our aim is to maximize the certainty equivalent of the stopping reward over the time horizon. In case of an infinite time horizon we assume that we have strictly negative cost when we do not stop. The certainty equivalent of a random variable is defined by $U^{-1}(\mathbb{E} U(X))$ where $U$ is an increasing concave function. If $U(x)=x$ we obtain as a special case the classical risk-neutral decision maker. The case $U(x)=\frac1\gamma e^{\gamma x }$ is often referred to as 'risk-sensitive', however the risk-sensitivity is here only expressed in a special way through the risk-sensitivity parameter $\gamma\neq 0$.  More general, the certainty equivalent may be written (assuming enough regularity of $U$) as
\begin{equation}\label{BReq:Urep} U^{-1}\Big(\mathbb{E}\big[U(X)\big]\Big)\approx \mathbb{E} X - \frac12 l_U(\mathbb{E} X) Var[X]\end{equation}
where \begin{equation}\label{BReq:AP}  l_U(x) = -\frac{U''(x)}{U'(x)}\end{equation} is the {\em Arrow-Pratt} function of absolute risk aversion. In case of an exponential utility, this absolute risk aversion is constant (for a discussion see \cite{bpli03}).

In addition we suppose that the process is partially observable. More precisely we suppose that we have a jointly Markovian process $(X_n,Y_n)$ where only the first component is observable. Also the stopping reward depends only on the first component. This class of models includes in particular Bayesian models where the unobservable process reduces to an unknown parameter. Important applications are sequential probability ratio tests. A particular example can e.g. be found in \cite{t84}. The theory of risk-neutral stopping problems can e.g. be found in \cite{crs71} or \cite{shi08}.

Stopping problems with general utility functions are rarely treated in the literature. \cite{m00} considers the classical house selling problem with general utility but with complete observation. In a separate section we show  that most of the results in \cite{m00} also extend to the Bayesian case. \cite{KMY96} considers stopping problems with denumerable state space and arbitrary utility function. The authors there discuss the so-called {\em monotone case} and give conditions for the optimality of {\em one-step-look-ahead rules}. Of course the stopping problems we treat here can be seen as special partially observable risk-sensitive Markov Decision Processes. The general theory for these type of problems has been developed in \cite{br15}. The special case of an exponential utility combined with partial observation has been treated in \cite{cchh05}, \cite{dims99}, \cite{jjr94}, \cite{s04}. The first treatment of the exponential certainty equivalent as an optimization criterion can be traced back to \cite{hm72}. See also \cite{whi87} and \cite{w90} for other risk sensitive criteria like the variance and \cite{br14} for general certainty equivalents with complete observation. Applications to risk management can be found in \cite{dl14}. In \cite{cchh15} the authors discuss the influence of the utility function on the optimal strategy.

Our paper is organized as follows. In the next section we present a short introduction to certainty equivalents and discuss their properties. In particular we show that they can serve as a reasonable criterion for risk-sensitive decision making. In Section \ref{BRsec:stop} we introduce our risk-sensitive stopping problem with partial observation and with finite time horizon. We prove the existence of an optimal stopping time and give a recursive algorithm for the computation of the maximal expected utility. We are also able to show that the more risk averse a decision maker is, the later she will stop. In Section \ref{BRsec:house} we will consider the Bayesian, risk-sensitive house selling problem. Here we generalize the results in \cite{m00} to the Bayesian case. In particular we show the existence of so-called {\em reservation levels} which characterize the optimal stopping strategy. We also discuss the dependence of the reservation levels on the available information, the attitude towards risk of the decision maker and the time horizon. Finally in Section \ref{BRsec:ITH} we consider the general stopping problem with infinite time horizon.

\section{Certainty Equivalents}
In this section we briefly recall the properties of our objective function and demonstrate that it is very well-suited for risk-sensitive optimization. For a deeper investigation we refer the reader to \cite{m07}. In what follows we work with the space of all bounded real-valued random variables $\mathcal{L}^\infty$. We interpret the outcome of a random variable $X$ as the realization of a risky position. The event $X<0$ represents a loss whereas $X>0$ represents a gain. For a strictly increasing function $U:\RR\to\RR$
we define the {\em certainty equivalent} $\rho_U : \mathcal{L}^\infty\to \RR$ by
$$ \rho_U(X) := U^{-1}\big( \mathbb{E}[U(X)]\big).$$
The quantity is called certainty equivalent since $U(\rho_U(X)) = \mathbb{E}[U(X)]$ which means that the utility of $\rho_U(X)$ is the same as the expected utility of $X$. Obviously $\rho_U$ is {\em law-invariant}, i.e. $\rho_U(X)=\rho_U(Y)$ if $X$ and $Y$ have the same distribution. In what follows we summarize some properties of $\rho_U$. For this purpose we assume that $U$ is {\em strictly increasing and concave}.  Recall that for two real-valued random variable $X$ and $Y$ we say that
\begin{itemize}
  \item[$\bullet$] $X\le_{st} Y$ if and only if $\mathbb{E} f(X) \le \mathbb{E} f(Y)$ for all increasing $f:\RR\to \RR$ for which the expectations exist,
  \item[$\bullet$] $X\le_{cv} Y$ if and only if $\mathbb{E} f(X) \le \mathbb{E} f(Y)$ for all concave $f:\RR\to \RR$ for which the expectations exist.
\end{itemize}
It is now easy to see that $ \rho_U$ is {\em monotone}, i.e.
\begin{itemize}
  \item[$\bullet$] {\bf (monotonicity)} $X\le_{st}Y$ implies that $ \rho_U(X) \le  \rho_U(Y)$.
\end{itemize}
Obviously we also have
\begin{itemize}
  \item[$\bullet$] {\bf (constancy)} $ \rho_U(m)=m$ for any constant $m\in \RR$.
\end{itemize}
 Moreover, we have consistency w.r.t.\ the concave ordering. This property is also called {\em Schur-concavity} (see e.g. \cite{da05}), i.e.
\begin{itemize}
  \item[$\bullet$] {\bf (Schur-concavity)} $X\le_{cv}Y$ implies that $ \rho_U(X) \le  \rho_U(Y)$.
\end{itemize}
Note that Schur-concavity is not the same as {\em concavity of a risk measure $\rho:\mathcal{L}^\infty\to \RR$ } which is defined by $$\rho(\alpha X +(1-\alpha) Y) \ge \alpha \rho(X)+(1-\alpha) \rho(Y),\mbox{ for all } \alpha\in [0,1] \mbox{ and r.v. } X,Y\in \mathcal{L}^\infty.$$ It has been shown in \cite{bm06} that any concave risk measure on a non-atomic probability space is Schur-concave. However, only when $U$ is linear or exponential, the certainty equivalent $\rho_U$ is also concave (see \cite{m07}).

Hence maximizing the certainty equivalent means to prefer risky positions with higher reward and lower variance. In the classical risk neutral setting, the variance of a position does not enter the decision criterion. This may sometimes lead to a very risky behavior (as e.g. demonstrated in \cite{br14} for a casino game). Finally let us consider the following axioms which are often imposed on a risk measure $\rho:\mathcal{L}^\infty\to \RR$:
\begin{itemize}
  \item[$\bullet$] {\bf (translation-invariance)} $\rho(X+m)=\rho(X)+m$ for any constant $m\in\RR$,
  \item[$\bullet$] {\bf (positive homogeneity)} $\rho(\alpha X)=\alpha \rho(X)$ for any constant $\alpha >0$.
\end{itemize}
It has been shown in \cite{m07} that $\rho_U$ is translation-invariant essentially if and only if $U$ is linear or exponential and $\rho_U$ is positive homogeneous if and only if $U$ is the logarithm or a power function (including the linear function).

More general there exists an axiomatic characterization of certainty equivalents which is know as {\em Nagumo-Kolmogorov-de Finetti theorem}. It states that a functional $\rho : \mathcal{L}^\infty \to \RR$ is a certainty equivalent if and only if it is monotone, law invariant, quasi-linear and has the constancy property. For an early proof see \cite{hardy}. For a more recent treatment see \cite{m07}.

Another motivation for certainty equivalents is the fact that they may be written (assuming enough regularity of $U$) as in \eqref{BReq:Urep}.
Thus the certainty equivalent is approximately a weighted criterion of expectation and variance.

An important special case is obtained when $$U(x)=\frac1\gamma e^{\gamma x},\, x\in\RR, \mbox{ with } \gamma <0$$
is an exponential function. In this case $$\rho_U(X) = \frac1\gamma \ln \mathbb{E} e^{\gamma X}$$ is the {\em entropic risk measure}. In this case if $X\sim \mathcal{N}(\mu,\sigma^2)$ then we obtain by direct calculation that $\rho_U(X) = \mu +\frac12 \gamma \sigma^2.$ The entropic risk measure also has a dual representation given by
$$ \rho_U(X) = \inf_{\mathbb{Q}\ll \PP} \Big\{\mathbb{E}_\mathbb{Q}[X] -\frac1\gamma \mathbb{E}_\mathbb{Q}\Big[\ln \frac{d\mathbb{Q}}{d\PP}\Big]\Big\},$$
where the infimum is attained at $$\mathbb{Q}^*(dz) = \frac{e^{\gamma z} \PP(dz)}{\int e^{\gamma y} \PP(dy)}.$$
Thus we can interpret the optimization problem in this case as a game against nature where the second player (nature) chooses the probability measure.

\section{Risk-sensitive Stopping Problems}\label{BRsec:stop}
We suppose that a {\em partially observable Markov Process} is given which we introduce as follows: We denote this process by $(X_n,Y_n)_{n\in\NN_0}$ and assume a Borel state space $E_X\times E_Y$. The $x$-component will be the {\em observable} part, the $y$-component {\em cannot be observed} by the controller. There is a stochastic transition kernel $Q$ from $E_X\times E_Y$ to $E_X\times E_Y$ which determines the distribution of the new state pair given the current state. So $Q(B|x,y)$ is the probability that the next state pair is in $B\in\mathcal{B}(E_X\times E_Y)$, given the current state is $(x,y)$. In what follows we assume that the transition kernel $Q$ has a density $q$ with respect to some $\sigma$-finite measures $\lambda$ and $\nu$, i.e.
$$ Q(B | x,y) =\int_{B} q(x',y'|x,y) \lambda(dx') \nu(dy'),\quad B\in\mathcal{B}(E_X\times E_Y).$$
For convenience we introduce the marginal transition kernel density by
$$ q^X(x'|x,y) := \int_{E_Y} q(x',y'|x,y) \nu(dy').$$
We assume that the initial distribution $Q_0$ of $Y_0$ is known. For a fixed (observable) initial state $x\in E_X$, the initial
distribution $Q_0$ together with the transition probability $Q$ define by the Theorem of Ionescu Tulcea a probability measure $\PP_{x}$ on
$(E_X\times E_Y)^{N+1} $ endowed with the product $\sigma$-algebra. More precisely $\PP_{xy}$ is the probability measure given $X_0=x$ and $Y_0=y$ and we define $\PP_x(\cdot) := \int P_{xy}(\cdot) Q_0(dy)$.

Next we have a measurable one-stage reward function $c: E_X\to \RR$ which depends only on the observable part of the process. Since $c<0$ is possible this could also be a cost. This reward (cost) is obtained as long as the process is not stopped. A reward $g:E_X\to\RR$ is obtained when the process is stopped. Stopping-times are taken w.r.t. the observable filtration $(\mathcal{F}_n)$ where $\mathcal{F}_n = \sigma(X_0,\ldots ,X_n)$. We first consider  problems with a finite time horizon $N$ and maximize the certainty equivalent of the stopping reward. Since $U^{-1}$ is increasing we can skip it from the optimization problem. Thus, we define
\begin{eqnarray}\label{BReq:S}
    J_N(x) &:=& \sup_{0\le \tau \le N}  \mathbb{E}_{x}\Big[U\Big(\sum_{k=0}^{\tau-1} c(X_k)+g(X_{\tau}) \Big)   \Big].
\end{eqnarray}
In order to have a well-defined problem we assume that for all $x\in E_X$
\begin{eqnarray}\label{BReq:int}
     \sup_{0\le \tau \le N}  \sup_{y\in E_Y} \mathbb{E}_{xy}\Big[\sum_{k=0}^{\tau-1} c^+(X_k)+g^+(X_{\tau}) \Big]<\infty.
\end{eqnarray}
Note that since $U$ is concave it can be bounded by a linear function and thus \eqref{BReq:int} implies that $J_N(x)<\infty$ for all $x\in E_X$. Unfortunately \eqref{BReq:S} cannot be solved with classical MDP techniques directly. Thus, we introduce the following auxiliary problems. For a probability measure $\mu\in \PP(E_Y)$ and a constant $s\in\RR$ define for $n=1,\ldots,N$:
\begin{eqnarray}
\label{BReq:V-def}  V_{n}(x,\mu,s)  &:=& \sup_{0\le \tau \le n}  \int_{E_Y} \mathbb{E}_{xy} \left[ U\Big(\sum_{k=0}^{\tau-1} c(X_k)+s+g(X_{\tau})\Big)\right]\mu(dy)
\end{eqnarray}
In view of \eqref{BReq:int} $V_n(x,\mu,s) < \infty$ for all $(x,\mu,s)$.
Obviously we have by this embedding technique that $J_N(x) = V_N(x,Q_0,0)$. We claim now that \eqref{BReq:V-def} can be solved with MDP techniques. In order to do so, we first have to cope with the fact that $(Y_n)$ is not observable. Hence we define the operator $\Phi:E_X\times E_X\times \PP(E_Y) \to \PP(E_Y)$ by
\begin{equation*}
 \Phi(x,x',\mu)(B) :=   \frac{\int_{B} q^X(x'|x,y) \mu(dy)}{\int_{E_Y} q^X(x'|x,y) \mu(dy) },\; B\in\mathcal{B}(E_Y).
\end{equation*}
Note that $\Phi$ is exactly the usual updating (Bayesian) operator which appears in classical POMDP (see e.g. \cite{br11}, Section 5.2). It updates the conditional probability of the unobservable state. In what follows denote by $(\mu_n)$ the sequence of probability measures on $E_Y$ generated by $\Phi$ with $\mu_0 := Q_0$. I.e. for $n\in\NN$, a history  $h_n := (x_0,\ldots ,x_n)\in E_X^{n+1}$ and $B\in \mathcal{B}(E_Y)$ we  define
\begin{eqnarray}
\nonumber  \mu_0(B|h_0) &:=& Q_0(B), \\
\label{BReq:murec1}  \mu_{n+1}(B|h_{n},x')&:=& \Phi\big(x_n,x',\mu_n(\cdot|h_n)\big)(B)
\end{eqnarray}
Then it is well-known (see e.g. \cite{br11}, Theorem 5.2.1) that
$$ \mu_n(B|X_0,\ldots,X_n) = \PP_x\big(Y_n\in B|X_0,\ldots,X_n\big),\quad B\in \mathcal{B}(E_Y).$$

We consider now a stopping problem with a process on the state space ${E}:=E_X\times \PP(E_Y)\times \RR$. The running reward is zero and the stopping reward  is $ g(x,\mu,s):= U\big(g(x)+s\big)$.  The transition law of the process is given by $\tilde{Q}(\cdot|x,\mu,s)$ which is for $(x,\mu,s) \in E$ and a measurable function $v:E\to \RR$ defined by
\begin{eqnarray*}
  && \int_{E_X}\int_{ \PP(E_Y)}\int_{\RR} v(x',\mu',s') \tilde{Q}(d(x',\mu',s') | x,\mu,s) \\
  && \hspace*{2cm}=\int_{E_X} v\Big(x',\Phi(x,x',\mu),s+c(x)\Big) Q^X(dx'|x,\mu)
\end{eqnarray*}
where \begin{equation}\label{BReq:QX} Q^X(B|x,\mu) := \int_B\int_{E_Y} q^X(x'|x,y) \mu(dy) \lambda(dx').\end{equation}
Stopping times are now considered w.r.t. the filtration $(\mathcal{G}_n)$ which is  defined by\linebreak  $\mathcal{G}_n = \sigma(X_0,\mu_0,S_0,\ldots ,X_n,\mu_n,S_n)$ with $S_n := \sum_{k=0}^{n-1} c(X_k)$. However note that obviously by construction of the sequences $(\mu_n)$ and $(S_n)$ we obtain  $\mathcal{F}_n =\mathcal{G}_n$.

\begin{theorem}\label{BRtheo:Bellman1}
It holds:
\begin{itemize}
  \item[a)] For $n=1,\ldots ,N$ and $(x,\mu,s) \in E_X\times\PP(E_Y)\times \RR$, the functions $V_n$ are given by
  \begin{eqnarray}
\nonumber   && V_0(x,\mu,s) = U\big(g(x)+s\big) \\
  \nonumber  && V_n(x,\mu,s) =\\
\label{BReq:vn}    && \max\Big\{ U\big(g(x)+s\big), \int_{E_X} V_{n-1}\Big(x',\Phi(x,x',\mu),s+c(x)\Big) Q^X(dx'|x,\mu)\Big\}
  \end{eqnarray}
The value function of \eqref{BReq:S} is then given by $J_N(x)=V_N(x,Q_0,0)$.
  \item[b)] For every $n=1,\ldots ,N$ and $(x,\mu,s) \in E_X\times\PP(E_Y)\times \RR$, let $f_n^*(x,\mu,s)=1$ if the maximum in the recursion \eqref{BReq:vn} is taken at $U\big(g(x)+s\big)$ and define $(g_0^*,\ldots,g_{N-1}^*)$ by
 $$ g_n^*(h_n) := f_{N-n}^*\big( x_n, \mu_n(\cdot|h_n),\sum_{k=0}^{n-1} c(x_k)\big),\quad n=0,\ldots,N-1.$$
 Then the optimal stopping time is given by
 $$ \tau^* := \inf\{ n\in\NN_0 : g_n^*(h_n) = 1\}\wedge N.$$
 Note that $\tau^*$ depends on the history $h_n$ of the process.
  \end{itemize}
\end{theorem}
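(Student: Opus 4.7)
The plan is to convert the partially observable stopping problem into a fully observable one on the augmented state space $E = E_X \times \PP(E_Y) \times \RR$, and then apply the standard backward-induction argument for finite-horizon stopping. The role of $\mu$ is to carry the conditional distribution of the unobservable component, while $s$ accumulates past running rewards so that the terminal utility $U(g(x)+s)$ separates cleanly from the continuation. The identity $J_N(x)=V_N(x,Q_0,0)$ is immediate from the definitions, so the real content of (a) is the Bellman recursion \eqref{BReq:vn}.

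For that recursion I would proceed by backward induction on $n$. The base case $n=0$ is immediate: the only admissible stopping time is $\tau=0$, so $V_0(x,\mu,s)=U(g(x)+s)$. For the inductive step, fix $n\ge 1$ and a stopping time $\tau$ with $0\le\tau\le n$. The event $\{\tau=0\}$ is $\mathcal{F}_0$-measurable, hence a deterministic function of $X_0$; for fixed $X_0=x$ the supremum therefore decomposes as the maximum of the value at $\tau=0$ (yielding $U(g(x)+s)$) and the supremum over stopping times with $\tau\ge 1$. For the latter, I would write $\tau=1+\tilde\tau$ with $\tilde\tau$ a stopping time of the shifted filtration satisfying $0\le\tilde\tau\le n-1$, pull the constant $c(x)$ out of the sum inside $U$ into an updated accumulated cost $s'=s+c(x)$, and apply the tower property conditional on $(X_1,Y_1)$. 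By the Markov property of $(X_n,Y_n)$, the inner conditional expectation is exactly the object defining $V_{n-1}$ started from $(X_1,Y_1)$. Integrating out $Y_1$ against its conditional law given the observables---which, by the updating property recalled before the theorem, equals $\Phi(x,X_1,\mu)$---and then integrating $X_1$ against its marginal $Q^X(\cdot\mid x,\mu)$ yields the continuation value in the form appearing in \eqref{BReq:vn}. Taking the supremum over $\tilde\tau$ produces $V_{n-1}$ inside the integral and closes the induction.

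Part (b) then follows from the standard one-step-look-ahead principle for finite-horizon stopping. The rule $\tau^*$ is admissible with respect to $(\mathcal{F}_n)$ because $f_{N-n}^*$ depends on $h_n$ only through $(x_n,\mu_n(\cdot\mid h_n),S_n)$, and this triple is $\mathcal{F}_n$-measurable by the identity $\mathcal{F}_n=\mathcal{G}_n$ noted in the excerpt. A second backward induction, comparing the expected utility achieved by $\tau^*$ with the Bellman value $V_{N-n}$, shows that $\tau^*$ attains the supremum in \eqref{BReq:S}; on the event that the maximum in \eqref{BReq:vn} is realized at the stopping reward, stopping immediately is optimal, and otherwise continuing one step reduces the problem to horizon $n-1$.

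The main technical obstacle will be the interchange of the supremum over $\tilde\tau$ with the integration over $(X_1,Y_1)$ in the continuation term. Under assumption \eqref{BReq:int} and in the Borel setting this is standard but requires a measurable-selection argument, which I would invoke via the general partially observable MDP framework of \cite{br11} and \cite{br15} already cited by the authors. A secondary but essential ingredient is the correctness of $\Phi$ as the Bayesian update for the conditional law of the unobservable state, so that the triple $(X_n,\mu_n,S_n)$ is Markov under $\tilde Q$; this is precisely the content of the cited Theorem 5.2.1 in \cite{br11}, and it is what legitimizes the fully observable reformulation on which the whole argument rests.
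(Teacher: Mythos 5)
Your outline is correct, but it is worth noting that the paper does not actually carry out this argument: its entire proof is the citation ``follows from Theorem 3.3 in \cite{br15}'', i.e.\ the stopping problem is treated as a special case of a general partially observable risk-sensitive Markov decision process (action space $\{\text{stop},\text{continue}\}$ with an absorbing post-stopping state), and the Bellman recursion, the filter reformulation and the measurable-selection issues are all inherited from that general framework. Your proposal reconstructs the underlying backward induction directly: the decomposition of the supremum at $\{\tau=0\}$ (legitimate because $\mathcal{F}_0=\sigma(X_0)$, so for fixed $x$ the event is deterministic), the shift $\tau=1+\tilde\tau$ with the accumulated reward folded into $s'=s+c(x)$, the tower property combined with the Markov property of $(X_n,Y_n)$, and the identification of the conditional law of $Y_1$ given the observables with $\Phi(x,X_1,\mu)$ via the filter theorem. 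This is the right skeleton and matches what the cited general theorem encapsulates. The two genuine technical points --- the interchange of $\sup_{\tilde\tau}$ with the integration over $(X_1,Y_1)$ (which needs a measurable selection of near-optimal continuation rules so that the ``$\ge$'' direction holds), and the joint measurability of $(x,\mu,s)\mapsto V_{n-1}$ on the Borel space $E_X\times\PP(E_Y)\times\RR$ so that $f_n^*$ is well defined --- are exactly the content you defer to \cite{br11} and \cite{br15}, which is the same delegation the authors make, only at a finer granularity. So the net effect is a more self-contained and more informative argument than the paper's one-line proof, at the cost of still leaning on the external machinery for the selection step; if you wanted a fully independent proof you would need to supply that step, e.g.\ by an explicit $\varepsilon$-optimal measurable selection on the augmented state space.
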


\begin{proof}
The proof follows from Theorem 3.3 in \cite{br15}.
\end{proof}

\begin{remark}
By definition we have $U\big(g(x)+s\big)\le V_n(x,\mu,s) \le V_{n+1}(x,\mu,s)$ for all $n\in\NN$ and  $(x,\mu,s) \in E_X\times\PP(E_Y)\times \RR$ i.e. the value of the stopping problem increases with the time horizon. This observation is of course obvious, since all stopping times which are feasible for a smaller time horizon are also feasible for a larger time horizon. Note that we use increasing and decreasing in a non-strict sense throughout.
\end{remark}

\begin{example}
Let $U(x) = \frac1\gamma e^{\gamma x}$ with $\gamma <0$. In this case Theorem \ref{BRtheo:Bellman1} simplifies due to the positive homogeneity of $\rho_U$ and it is easy to see that $$V_n(x,\mu,s)=e^{\gamma s} h_n(x,\mu)$$
and the $h_n$ satisfy the recursion
\begin{eqnarray*}
  h_0(x,\mu) &=& \frac1\gamma e^{\gamma g(x)} \\
  h_n(x,\mu) &=& \max\Big\{ \frac1\gamma e^{\gamma g(x)}, e^{\gamma c(x)} \int_{E_X} h_{n-1}\Big(x',\Phi(x,x',\mu)\Big) Q^X(dx'|x,\mu)\Big\}.
\end{eqnarray*}
As a result, in the exponential case the state space is reduced by one variable and the state of the auxiliary problem consists only of $x$ and the conditional distribution of $y$ given the history $h_n$. In particular the optimal stopping rule depends on the history only through the conditional probability of the unobservable random variable. The same situation arises in the classical risk-neutral stopping problem.
\end{example}

Finally we discuss the influence of the risk attitude of the decision maker on the optimal stopping time. We use the Arrow-Pratt function of absolute risk aversion \eqref{BReq:AP} to measure the risk sensitivity. A utility function $U$ is said to be {\em more risk averse} than a utility function $W$ if $l_U(x) \ge l_W(x)$ for all $x\in\RR$. For our purpose it is crucial to note that a utility function $U$ is more risk averse than a utility function  $W$ if and only if, there exits an increasing concave function $r:\RR\to\RR$ such that $U=r\circ W$. In what follows we denote all quantities which refer to utility function $U$ by $g_n^*(h_n,U), V_n(x,\mu,s,U)$ etc. and similar for $W$.

\begin{theorem}\label{BRtheo:compstoppingrule}
Suppose that the utility function $U$ is more risk averse than the utility function  $W$. For all $n=0,1,\ldots,N-1$ and histories $h_n$ we obtain that $g_n^*(h_n,W) =1 $ implies $g_n^*(h_n,U)=1$, i.e. a more risk-averse decision maker will not stop later.
\end{theorem}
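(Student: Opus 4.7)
The plan is to exploit the characterization that $U$ is more risk averse than $W$ iff $U = r \circ W$ for some increasing concave function $r$, and then to compare the two value functions recursively via Jensen's inequality.

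First I would establish the pointwise domination
\[
V_n(x,\mu,s,U) \;\leq\; r\!\left(V_n(x,\mu,s,W)\right) \qquad \text{for all } n \text{ and } (x,\mu,s),
\]
by induction on $n$ using the Bellman recursion from Theorem \ref{BRtheo:Bellman1}(a). For $n=0$ this is immediate because $V_0(x,\mu,s,U)=U(g(x)+s)=r(W(g(x)+s))=r(V_0(x,\mu,s,W))$. For the inductive step, I would apply the inductive hypothesis inside the integral, then use Jensen's inequality for the concave $r$ (pushing $r$ outside the $Q^X$-integral), and finally use that $r$ is increasing to commute it with the maximum. Explicitly,
\[
\int V_n(\cdot,U)\,Q^X \leq \int r(V_n(\cdot,W))\,Q^X \leq r\!\left(\int V_n(\cdot,W)\,Q^X\right),
\]
and combining with $U(g(x)+s)=r(W(g(x)+s))$ inside the $\max$ gives the claim at level $n+1$.

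Now suppose $g_n^*(h_n,W)=1$; by definition of $f_{N-n}^*$, this means that at the state $(x_n,\mu_n(\cdot|h_n),S_n)$ with $S_n:=\sum_{k=0}^{n-1}c(x_k)$ the stopping option attains the maximum in \eqref{BReq:vn} for the utility $W$, i.e.\
\[
W\bigl(g(x_n)+S_n\bigr) \;\geq\; \int_{E_X} V_{N-n-1}\!\bigl(x',\Phi(x_n,x',\mu_n),S_n+c(x_n),W\bigr)\,Q^X(dx'|x_n,\mu_n).
\]
Applying the increasing function $r$ to both sides, then Jensen (concavity of $r$) on the right, and finally the pointwise domination established above, one obtains
\[
U\bigl(g(x_n)+S_n\bigr) \;\geq\; \int_{E_X} V_{N-n-1}\!\bigl(x',\Phi(x_n,x',\mu_n),S_n+c(x_n),U\bigr)\,Q^X(dx'|x_n,\mu_n),
\]
which is exactly the condition $f_{N-n}^*(x_n,\mu_n,S_n,U)=1$, i.e.\ $g_n^*(h_n,U)=1$.

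The main obstacle is getting the direction of Jensen correct and making sure that $r$ can be pulled through both the expectation and the $\max$: the latter relies on monotonicity, the former on concavity, so both hypotheses on $r$ are used essentially once. Apart from that, the argument is a clean induction on the horizon, and no additional integrability beyond the standing assumption \eqref{BReq:int} is required because $V_n(\cdot,W)$ is finite by Theorem \ref{BRtheo:Bellman1}.
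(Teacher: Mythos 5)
Your proposal is correct and matches the paper's own argument essentially step for step: the same characterization $U=r\circ W$, the same inductive domination $V_n(\cdot,U)\le r\circ V_n(\cdot,W)$ via Jensen's inequality (in the correct direction, $\int r\circ V\,dQ^X\le r\bigl(\int V\,dQ^X\bigr)$ for concave $r$ and the probability kernel $Q^X$), and the same final chain converting the stopping inequality for $W$ into the one for $U$. Nothing further is needed.
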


\begin{proof}
Let $r$ be such that $U=r\circ W$. We first prove by induction on $n$ that $V_n(x,\mu,s,U)\le r\circ V_n(x,\mu,s,W)$ for all $(x,\mu,s)$ and $n$.
First for $n=0$ we have
\begin{equation*}
    V_0(x,\mu,s,U) = U\big( g(x)+s\big) = r\circ W\big( g(x)+s\big) = r\circ V_0(x,\mu,s,W).
\end{equation*}
Using the Jensen inequality, the induction hypothesis and the fact that $r$ is increasing and concave we obtain
\begin{eqnarray*}
  &&V_n(x,\mu,s,U) =\\
  &=& \max\Big\{ U\big(g(x)+s\big), \int_{E_X} V_{n-1}\Big(x',\Phi(x,x',\mu),s+c(x),U\Big) Q^X(dx'|x,\mu)\Big\}\\
   &\le& \max\Big\{ r\circ W\big(g(x)+s\big), \int_{E_X} r\circ V_{n-1}\Big(x',\Phi(x,x',\mu),s+c(x),W\Big) Q^X(dx'|x,\mu)\Big\} \\
   &\le& \max\Big\{ r\circ W\big(g(x)+s\big), r\circ \int_{E_X}  V_{n-1}\Big(x',\Phi(x,x',\mu),s+c(x),W\Big) Q^X(dx'|x,\mu)\Big\} \\
   &=& r\circ \max\Big\{ W\big(g(x)+s\big), \int_{E_X}  V_{n-1}\Big(x',\Phi(x,x',\mu),s+c(x),W\Big) Q^X(dx'|x,\mu)\Big\} \\
   &=& r\circ  V_n(x,\mu,s,W).
\end{eqnarray*}
This implies in particular that $$W\big(g(x)+s\big) \ge \int_{E_X}  V_{n-1}\Big(x',\Phi(x,x',\mu),s+c(x),W\Big) Q^X(dx'|x,\mu)$$
leads to
\begin{eqnarray*}
&& U\big(g(x)+s\big)=\\
&=&  r\circ W\big(g(x)+s\big) \ge r\circ \int_{E_X}  V_{n-1}\Big(x',\Phi(x,x',\mu),s+c(x),W\Big) Q^X(dx'|x,\mu) \\
   &\ge&  \int_{E_X} r\circ   V_{n-1}\Big(x',\Phi(x,x',\mu),s+c(x),W\Big) Q^X(dx'|x,\mu)  \\
   &\ge&  \int_{E_X}   V_{n-1}\Big(x',\Phi(x,x',\mu),s+c(x),U\Big) Q^X(dx'|x,\mu).
\end{eqnarray*}
By definition this means that $f_n^*(x,\mu,s,W)=1$ implies $f_n^*(x,\mu,s,U)=1$. Transferring this observation to $g_n^*$ implies the statement.
\end{proof}

\section{Risk-Sensitive Bayesian House Selling Problem}\label{BRsec:house}
Suppose someone wants to sell her house. We assume that offers $X_0,\ldots,X_N$ arrive independently and are identically distributed with distribution $Q_\theta$.  Here $\theta\in \Theta$ is an unknown parameter. We assume that $Q_\theta$ has a density $q(x|\theta)$ and some prior distribution $Q_0$ for $\theta$ is given. As long as offers are rejected a maintenance cost of $c>0$ has to be paid. When an offer is accepted, the price is obtained and the game ends. The aim is to find the maximal risk-sensitive stopping reward
\begin{eqnarray}\label{BReq:H}
    J_N(x) &:=& \sup_{0\le \tau \le N}  \int_{\Theta} \mathbb{E}_{x\theta}\Big[U\Big(X_{\tau}-c \tau \Big)   \Big] Q_0(d\theta).
\end{eqnarray}
This is a special case of our general model with $Y_n\equiv \theta$, $c(x)=-c$ and $g(x)=x$. The integrability condition \eqref{BReq:int} reduces to
\begin{equation*}
    \sup_\theta \mathbb{E}_{\theta}[X_1^+] <\infty.
\end{equation*}
Note that this problem without partial observation has been investigated in \cite{m00}. According to Theorem \ref{BRtheo:Bellman1} we obtain $J_N$ by computing the functions $V_n$. These are given by
 \begin{eqnarray*}
    V_0(x,\mu,s) &=& U\big(x+s\big) \\
    V_n(x,\mu,s) &=& \max\Big\{ U\big(x+s\big), c_n(\mu,s) \Big\}
  \end{eqnarray*}
with $ c_n(\mu,s):=\int_{\RR} V_{n-1}\Big(x',\Phi(x',\mu),s-c\Big) Q(dx'|\mu)$ and we have that $J_N(x)=V_N(x,Q_0,0)$. Note that $Q(dx'|\mu)$ corresponds to $Q^X(dx'|x,\mu)$ in \eqref{BReq:QX} and is thus given by
$$ Q(\cdot|\mu) = \int_\Theta Q_\theta(\cdot) \mu(d\theta)$$ and the updating operator simplifies to
$$  \Phi(x,\mu)(B) :=   \frac{\int_{B} q(x|\theta) \mu(d\theta)}{\int_{\Theta} q(x|\theta) \mu(d\theta) },\; B\in\mathcal{B}(\Theta).
$$ Moreover, when we define $f_n^*(x,\mu,s)=1$ if $U\big(x+s\big)\ge c_n(\mu,s)$ and $(g_0^*,\ldots,g_{N-1}^*)$ by
 $$ g_n^*(h_n) := f_{N-n}^*\big( x_n, \mu_n(\cdot|h_n),-nc\big),\quad n=0,\ldots,N-1,$$
then the optimal stopping time for problem \eqref{BReq:H} is given by
 $$ \tau^* := \inf\{ n\in\NN_0 : g_n^*(h_n) = 1\}\wedge N.$$
 Let us now further investigate the optimal stopping time. We have
 $$ g_n^*(h_n)=1 \quad \Leftrightarrow \quad U(x_n-nc) \ge c_{N-n}(\mu_n(\cdot|h_n),-nc).$$
When we define
 \begin{eqnarray*}
   U_n(x) &:=& U(x-nc) \\
   d_k(\mu,U_n) &:=& c_k(\mu,-nc) = \int_{\RR} V_{k-1}\Big(x',\Phi(x',\mu),-(n+1)c\Big) Q(dx'|\mu),
 \end{eqnarray*}
 then we obtain
 $$ g_n^*(h_n)=1 \quad \Leftrightarrow \quad x_n \ge U_n^{-1}\big(d_{N-n}(\mu_n(\cdot|h_n),U_n)\big) =: x_{n,N}^*(\mu_n(\cdot|h_n)).$$
  Note that $U_n^{-1}(x) = nc +U^{-1}(x)$. We call $x_{n,N}^*(\cdot)$ {\em reservation level}. The reservation levels depend on $\mu_n$ and $U$. The optimal stopping time is hence the first time, the offer exceeds the corresponding, history dependend reservation level.

\begin{theorem}\label{BRtheo:xrec}
\begin{itemize}
  \item[a)] The optimal stopping time for the Bayesian house selling problem is given by
$$\tau^* = \inf\big\{ n\in\NN_0 : X_n \ge x_{n,N}^*(\mu_n(\cdot|h_n))\big\}\wedge N.$$
  \item[b)] The reservation levels can recursively be computed by
  \begin{eqnarray*}
    x_{N-1,N}^*(\mu) &=& U_{N-1}^{-1} \circ \int_\RR  U_N(x) Q(dx|\mu)   \\
    x_{n,N}^*(\mu) &=&  U_{n}^{-1} \circ \int_\RR U_{n+1}\Big( \max\big\{ x, x_{n+1,N}^*(\Phi(x,\mu))\big\}\Big)  Q(dx|\mu).\qed
  \end{eqnarray*}
\end{itemize}
\end{theorem}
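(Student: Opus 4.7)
Part (a) is essentially a relabeling: the formula $\tau^*=\inf\{n:g_n^*(h_n)=1\}\wedge N$ for this specific problem was already derived just before the theorem statement, and the equivalence $g_n^*(h_n)=1\Leftrightarrow X_n\ge x_{n,N}^*(\mu_n(\cdot|h_n))$ is exactly how the reservation level $x_{n,N}^*$ was defined (using that $U_n$ is strictly increasing so that $U_n(x_n)\ge d_{N-n}(\mu_n,U_n)$ iff $x_n\ge U_n^{-1}(d_{N-n}(\mu_n,U_n))$). So nothing beyond bookkeeping is needed for (a).

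For part (b), the plan is to unfold the definitions of $c_{k}(\mu,s)$ and $V_k$ using the Bellman recursion from Theorem \ref{BRtheo:Bellman1}. For the terminal case $n=N-1$, one has $k=N-n=1$ and therefore $c_1(\mu,-(N-1)c)=\int_{\RR}V_0\bigl(x',\Phi(x',\mu),-Nc\bigr)\,Q(dx'|\mu)=\int_{\RR}U(x'-Nc)\,Q(dx'|\mu)=\int_{\RR}U_N(x')\,Q(dx'|\mu)$, and applying $U_{N-1}^{-1}$ gives the first formula.

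For general $n<N-1$ the main step is to recognize that the inner optimized value $c_{N-n-1}(\Phi(x',\mu),-(n+1)c)$ equals $U_{n+1}\bigl(x_{n+1,N}^*(\Phi(x',\mu))\bigr)$ by the very definition of the reservation level at time $n+1$. Substituting this into the Bellman equation gives
\[
V_{N-n-1}\bigl(x',\Phi(x',\mu),-(n+1)c\bigr)=\max\Bigl\{U_{n+1}(x'),\,U_{n+1}\bigl(x_{n+1,N}^*(\Phi(x',\mu))\bigr)\Bigr\}.
\]
Since $U_{n+1}$ is strictly increasing, the max can be pulled inside the utility to give $U_{n+1}\bigl(\max\{x',x_{n+1,N}^*(\Phi(x',\mu))\}\bigr)$. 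Integrating against $Q(dx'|\mu)$ yields $c_{N-n}(\mu,-nc)$, and applying $U_n^{-1}$ produces the announced recursion.

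The only points that require care, and where I expect the minor obstacle, are (i) keeping track of the shift in the cost argument $s$ — each step adds $c(x)=-c$, so the accumulated offset at level $k-1$ in the recursion starting from $s=-nc$ is $-(n+1)c$, which is exactly what allows the identification $U(\cdot-(n+1)c)=U_{n+1}(\cdot)$; and (ii) justifying pulling the max inside $U_{n+1}$, which relies solely on the strict monotonicity of $U$. No integrability or measurability issue arises beyond the standing assumption \eqref{BReq:int} and the fact (already noted) that $V_n<\infty$.
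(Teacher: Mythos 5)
Your proposal is correct and follows essentially the same route as the paper's proof: unfold the Bellman recursion for $V_{N-n-1}$, identify the continuation value $c_{N-n-1}(\Phi(x,\mu),-(n+1)c)$ with $U_{n+1}\big(x_{n+1,N}^*(\Phi(x,\mu))\big)$ via the definition of the reservation level, and pull the max inside $U_{n+1}$ by monotonicity. The bookkeeping of the cost offset $-(n+1)c$ that you flag is exactly the only delicate point in the paper's own computation as well.
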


\begin{proof}
Part a) is clear from the definition and the previous results. Part b) can be shown by inserting the correct definitions. For $n=N-1$ we obtain from the definition of $x_{N-1,N}^*(\mu) $ that $$x_{N-1,N}^*(\mu) = U_{N-1}^{-1} \big( d_1(\mu,U_{N-1})\big)$$ with $$ d_1(\mu, U_{N-1})= \int V_0\big(x,\Phi(x,\mu),-Nc\big) Q(dx|\mu) = \int U_N(x) Q(dx|\mu).$$
For $ x_{n,N}^*$ we obtain by definition:
$$ x_{n,N}^*(\mu)= U_n^{-1}\big(d_{N-n}(\mu,U_n)\big).$$
Further $d_{N-n}(\mu,U_n)$ can be written as
\begin{eqnarray*}
&&  d_{N-n}(\mu,U_n) =  \int_{\RR} V_{N-n-1}\Big(x,\Phi(x,\mu),-(n+1)c\Big) Q(dx|\mu)\\
   &=& \int_{\RR} \max\Big\{ U(x-(n+1)c), \\
   &&\int V_{N-n-2}\Big(x',\Phi(x',\Phi(x,\mu)),-(n+2)c\Big) Q(dx'|\Phi(x,\mu))\Big\} Q(dx|\mu) \\
   &=& \int_{\RR} \max\Big\{ U_{n+1}(x), d_{N-n-1}(\Phi(x,\mu),U_{n+1})\Big\}Q(dx|\mu)\\
 &=& \int_{\RR} U_{n+1}\Big(\max\big\{ x, U_{n+1}^{-1}\circ d_{N-n-1}(\Phi(x,\mu),U_{n+1})\big\}\Big)Q(dx|\mu)
\end{eqnarray*}
and the statement follows from the definition of $x_{n+1,N}^*$.
\end{proof}

\begin{remark}
It obviously holds that
\begin{eqnarray*}
  d_N(\mu,U) &=&  \int_{\RR} V_{N-1}\Big(x,\Phi(x,\mu),-c\Big) Q(dx|\mu) \\
   &=& \sup_{1\le \tau \le N} \int_\Theta \int_{\RR} \mathbb{E}_{x\theta} \big[ U(X_\tau-c\tau)\big] Q_\theta(dx)\mu(d\theta),
\end{eqnarray*}
i.e. $d_N(Q_0,U)$ is the value of the stopping problem when we start without known initial offer.
\end{remark}

\begin{example}\label{BRex:num}
In case $U(x) = \frac1\gamma e^{\gamma x}$ with $\gamma <0$ the recursion for the reservation levels simplifies. In order to see this, note that $U_n(x) = U(x-nc)=\frac1\gamma e^{\gamma x} \cdot e^{-\gamma nc}$ and $U_n^{-1}(x) = U^{-1}(x)+nc = \frac1\gamma \ln(\gamma x) +nc$. With these observation we obtain
\begin{eqnarray*}
    x_{N-1,N}^*(\mu) &=& -c+\frac1\gamma \ln \Big( \int_\RR e^{\gamma x} Q(dx|\mu)\Big)   \\
    x_{n,N}^*(\mu) &=&  -c+ \frac1\gamma \ln\Big( \int_\RR e^{\gamma \max\big\{ x, x_{n+1,N}^*(\Phi(\mu,x))\big\}}  Q(dx|\mu)\Big).
  \end{eqnarray*}
Note that in contrast to the $  x_{n,N}^*$ with general utility function, the reservation levels in the exponential utility case depend only on the time difference to the planning horizon. More precisely we could also define $x_n^*(\mu) := x_{N-n,N}^*(\mu)$ and obtain
 \begin{eqnarray*}
    x_{1}^*(\mu) &=& -c+\frac1\gamma \ln \Big( \int_\RR e^{\gamma x} Q(dx|\mu)\Big)   \\
    x_{n}^*(\mu) &=&  -c+ \frac1\gamma \ln\Big( \int_\RR e^{\gamma \max\big\{ x, x_{n-1}^*(\Phi(\mu,x))\big\}}  Q(dx|\mu)\Big).
  \end{eqnarray*}
  In particular we can start computing the reservation levels without fixing a planning horizon in advance.

  We did this in the following {\em numerical example}, where $Q_\theta = B(1,\theta)$ is a Bernoulli distribution with unknown 'success' probability $\theta$, the prior distribution of $\theta$ is uniform on $[0,1]$ and $c= 0.1$. For a planning horizon of $N=10$ we computed the optimal stopping rule. Of course an offer of $1$ will always be accepted. An offer of $0$ may be accepted when we fear the accumulation of cost. It turns out that this decision heavily depends on the risk aversion parameter $\gamma<0$. The smaller $\gamma$, the more risk averse the decision maker is.
  \begin{figure}
    \centering
   \begin{tabular}{|c|c|c|c|c|c|c|c|}
    \hline

    \rule[-4mm]{0mm}{10mm}$-\infty< \gamma < -2.2$ & $-2.2<\gamma < -1.51$ & $-1.51<\gamma < -1.1$ & $-1.1<\gamma <  -0.8$   \\

    \hline
    \rule[-2mm]{0mm}{6mm} 0 & 1 & 2 & 3   \\ \hline
    \hline
     \rule[-4mm]{0mm}{10mm}   $-0.8<\gamma< -0.56$   &  $-0.56<\gamma <-0.34$ & $-0.34<\gamma < -0.18$ & $-0.18<\gamma < -0.03$ \\ \hline
     \rule[-2mm]{0mm}{6mm} 4 & 5 & 6 & 7 \\\hline
     \end{tabular}
   \caption{Optimal number of zeros which are rejected.}
    \label{BRfig:w}
\end{figure}
 {\bf Figure \ref{BRfig:w}} has to be read as follows: When $\gamma < -2.2$, the decision maker will stop immediately. For $-2.2<\gamma < -1.51$ she will reject at least the first offer when it is zero. For  $-1.51<\gamma < -1.1$ she will reject the first two offers when they are zero. And so on. The switch from rejecting 8 to 9 zeros is for $\gamma$ smaller than $-10^{-8}$.

\end{example}

\subsection{Influence of the Filter on the Reservation Levels}
In order to discuss the influence of the filter on the reservation levels we make some further simplifying assumptions. Indeed it is often the case that the filter $\mu_n(\cdot |h_n)$ does only depend on a part of the history $h_n$ or on a certain function of it. In what follows we assume that there is a (Borel) information set $I$ endowed with a $\sigma$-algebra such that there exist a measurable function $t_n : H_n \to I$ and a transition kernel $\hat{\mu}$ from $I$ to $\Theta$ such that
$$ \mu_n(B|h_n) = \hat{\mu}(B|t_n(h_n)),\quad \mbox{for } B\in \mathcal{B}(E_Y).$$
The function $t_n$ is sometimes called {\em sufficient statistics}. Further we assume that there is a measurable mapping $\hat{\Phi}: I \times E_X$ such that
$$ t_{n+1}(h_{n+1}) = \hat{\Phi}\big( x_{n+1},t_n(h_n)\big)$$
and that $\hat{\mu}$ has a density $\hat{p}(\cdot|i)$.
Thus, we have to replace $\mu_n$ by the current information state and obtain in particular for the reservation levels:
  \begin{eqnarray*}
    x_{N-1,N}^*(i) &:=& U_{N-1}^{-1} \circ \int_\RR  U_N(x) Q(dx|i)   \\
    x_{n,N}^*(i) &:=&  U_{n}^{-1} \circ \int_\RR U_{n+1}\Big( \max\big\{ x, x_{n+1,N}^*(\hat{\Phi}(x,i))\big\}\Big)  Q(dx|i)
  \end{eqnarray*}
where
$$Q(B|i) := \int_B\int_{\Theta} q(x|\theta) \hat{\mu}(d\theta|i) \lambda(dx).$$
The next step is to introduce an order relation on the set $I$ where we assume now that $\Theta\subset \RR$. We define here for $i,i'\in I$
\begin{equation*}
    i \le i' \quad :\Leftrightarrow \quad \hat{\mu}(\cdot |i) \le_{lr} \hat{\mu}(\cdot |i')
\end{equation*}
where $\le_{lr} $ is the likelihood ratio ordering which is defined by
\begin{equation*}
   \hat{\mu}(\cdot |i) \le_{lr} \hat{\mu}(\cdot |i') \quad \Leftrightarrow\quad  \frac{\hat{p}(\theta|i')}{\hat{p}(\theta|i)}\quad\mbox{is increasing in } \theta.
\end{equation*}
Note that $\hat{p}$ is the density of $\hat{\mu}$. The likelihood ratio ordering implies the stochastic ordering.  Now we are able to formulate the main result of this subsection

\begin{theorem}\label{BRlem:info}
Suppose that $q(x|\theta)$ is $MTP_2$, i.e. $q(\cdot|\theta) \le_{lr} q(\cdot |\theta')$ for all $\theta\le \theta'$, then the reservation levels $x^*_{n,N}(i)$ are increasing in $i$.
\end{theorem}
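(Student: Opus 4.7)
My plan is to proceed by backward induction on $n$, exploiting the $MTP_2$ assumption in two different guises: first to get monotonicity of the Bayesian filter $\hat\Phi(x,i)$ both in the observation $x$ and in the information state $i$, and second to get stochastic monotonicity of the predictive distribution $Q(\cdot|i)$ in $i$. Once these two monotonicity properties are in place, the induction driven by the recursion displayed just before the theorem goes through via a short sandwich argument.

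First I would check the monotonicity properties of the filter. By construction, $\hat\mu(\cdot|\hat\Phi(x,i))$ has a density proportional to $q(x|\theta)\hat p(\theta|i)$. Hence for $i\le i'$ the likelihood ratio between $\hat\mu(\cdot|\hat\Phi(x,i'))$ and $\hat\mu(\cdot|\hat\Phi(x,i))$ reduces to $\hat p(\theta|i')/\hat p(\theta|i)$, which is increasing in $\theta$ by $i\le i'$, so $\hat\Phi(x,i)\le\hat\Phi(x,i')$ in the order on $I$. Similarly, for $x\le x'$ the relevant ratio collapses to $q(x'|\theta)/q(x|\theta)$, which is increasing in $\theta$ by $MTP_2$, so $\hat\Phi(\cdot,i)$ is also increasing. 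For the predictive law I would use that $MTP_2$ of $q$ implies $Q_\theta\le_{lr}Q_{\theta'}$ and hence $Q_\theta\le_{st}Q_{\theta'}$ for $\theta\le\theta'$; combined with $\hat\mu(\cdot|i)\le_{st}\hat\mu(\cdot|i')$ (from $\le_{lr}$) and the mixture representation $Q(B|i)=\int Q_\theta(B)\hat\mu(d\theta|i)$, this gives $Q(\cdot|i)\le_{st}Q(\cdot|i')$.

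With these tools the induction is routine. The base case $n=N-1$ follows because $U_N$ is increasing and $Q(\cdot|i)\le_{st}Q(\cdot|i')$ implies $i\mapsto\int U_N(x)\,Q(dx|i)$ is increasing, which $U_{N-1}^{-1}$ preserves. For the inductive step, assuming $x^*_{n+1,N}$ is increasing on $I$, I would introduce
$$F(x,i) := U_{n+1}\Big(\max\big\{x,\, x^*_{n+1,N}(\hat\Phi(x,i))\big\}\Big)$$
and verify that $F$ is increasing in each variable separately: in $i$ for fixed $x$ by the induction hypothesis composed with $i$-monotonicity of $\hat\Phi$; in $x$ for fixed $i$ because both $x\mapsto x$ and $x\mapsto x^*_{n+1,N}(\hat\Phi(x,i))$ are increasing, and $\max$ and $U_{n+1}$ preserve monotonicity. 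For $i\le i'$ one then sandwiches
$$\int F(x,i)\,Q(dx|i)\;\le\;\int F(x,i')\,Q(dx|i)\;\le\;\int F(x,i')\,Q(dx|i'),$$
the first inequality using pointwise monotonicity of $F$ in $i$, the second applying $Q(\cdot|i)\le_{st}Q(\cdot|i')$ to the increasing function $F(\cdot,i')$. Since $U_n^{-1}$ is increasing, the conclusion $x^*_{n,N}(i)\le x^*_{n,N}(i')$ follows.

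The main obstacle will be disentangling the two roles of $i$ in the recursion — as an argument of $\hat\Phi$ inside the integrand and as the index of the integrating measure — since each role calls for a different consequence of $MTP_2$. In particular, the sandwich step breaks without monotonicity of $F(\cdot,i')$ in $x$, and it is precisely this ingredient that would fail for a non-$MTP_2$ likelihood.
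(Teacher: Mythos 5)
Your proposal is correct and follows essentially the same route as the paper: backward induction using that $\hat\Phi$ is increasing in $(x,i)$ (the paper cites Lemma 5.4.9 of its reference for what you verify directly via likelihood ratios) and that the predictive law is stochastically increasing in $i$, with your sandwich inequality being exactly the paper's two-step monotonicity argument through the iterated integral over $x$ and $\theta$.
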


\begin{proof}
We prove the statement by induction on $n$. First consider
$$  x_{N-1,N}^*(i) = U_{N-1}^{-1} \circ \int_\RR  U_N(x)\int_\Theta q(x|\theta)\hat{\mu}(d\theta|i)\lambda(dx). $$
Since $x\mapsto U_N(x)$ is increasing and the $\le_{lr}$ implies the $\le_{st}$ order we obtain that
$$ \theta \mapsto  \int_\RR  U_N(x) q(x|\theta) \lambda(dx) =: f(\theta)$$
is increasing in $\theta$. Thus by the definition of $i\le i'$ we obtain that $\int_\Theta f(\theta)\hat{\mu}(d\theta|i)$ is increasing in $i$. Now suppose the statement is true for $x_{n+1,N}^*(i)$. Next note that by our assumption on $q(x|\theta)$ we have that $$ (i,x) \mapsto \hat{\Phi}(x,i)$$ is increasing. This follows from Lemma 5.4.9 in \cite{br11}. Hence
$$ (i,x)\mapsto U_{n+1}\Big( \max\big\{ x, x_{n+1,N}^*(\hat{\Phi}(x,i))\big\}\Big)=: f(i,x)$$
is increasing. Next by our assumption on $q(x|\theta)$ we have that
$$ (i,\theta) \mapsto \int_\RR f(i,x) q(x|\theta) \lambda(dx)=: \hat{f}(i,\theta)$$
is increasing. And finally we obtain that
$$ x^*_{n,N}(i) = U_n^{-1}\circ \int_\Theta \hat{f}(i,\theta) \hat{\mu}(d\theta|i)$$ is increasing which completes the induction.
\end{proof}


For further details and examples we refer the reader to Section 5.4 in \cite{br11}.

\begin{example}\label{BRsubsec:ex}
In this example we consider the special case of exponentially distributed random variables (offers)
$$ q(x|\theta) = \frac{1}{\theta} e^{-\frac{1}{\theta}x} , \quad x \ge 0,\; \theta\in\Theta:=(0,\infty).$$
According to our definition of $\mu_n$ we get by recursion for $h_n=(x_1,\ldots,x_n)$ and $B\in \mathcal{B}(\Theta)$
$$ \mu_n(B|h_n) = \frac{\int_B \frac{1}{\theta^n} \exp( -\frac1\theta \sum_{k=1}^n x_k) Q_0(d\theta)}{\int_\Theta \frac{1}{\theta^n} \exp( -\frac1\theta \sum_{k=1}^n x_k) Q_0(d\theta)}.$$
Thus $t_n(x_1,\ldots ,x_n) = \Big(\sum_{k=1}^{n} x_k, n\Big)$ is a sufficient statistic. Thus, we have $I:=\RR_+\times \NN_0$ and
denote $i=(s,n)\in I$. Moreover, $\hat{\Phi}(x,(s,n))=(s+x,n+1)$ and the conditional distribution of the unknown parameter has
the form
$$ \hat{\mu}\big(d\theta |s,n\big) \propto \Big( \frac1\theta\Big)^n
e^{-\frac{s}{\theta}} Q_0(d\theta)$$ if the information $(s,n)$ is
given. With this representation it is not difficult to verify that
$$ i=(s,n) \le i'=(s',n') \quad \Leftrightarrow\quad s\le s'
\;\mbox{and}\; n\ge n'.$$ Further, the family of densities $
q(x|\theta)$ is $MTP_2$ in $x\ge 0$ and $\theta\ge 0$, thus Theorem \ref{BRlem:info} applies.

If we assume now a special prior distribution of the unknown parameter then we
obtain an explicit distribution for $Q(\cdot|i)$. We assume that the prior
distribution $Q_0$ is a so-called {\em Inverse Gamma distribution},
i.e.\ the density is given by
$$  Q_0(d\theta) = \frac{b^a}{\Gamma(a)} \Big( \frac1\theta\Big)^{a+1}
e^{-\frac{b}{\theta}} d\theta,\quad \theta>0 $$ where $a>1$ and
$b>0$ are fixed.
Then the distribution ${Q}$ is given by
$${Q}\big(dx|s,n\big) = \int q(dx|\theta) \hat{\mu}\big(d\theta
|s,n\big) = (n+a) \frac{(s+b)^{n+a}}{(x+s+b)^{n+a+1}}  dx.$$ Hence
${Q}$ is a special {\em Second Order Beta distribution}. In a risk-neutral situation a similar setting has been considered in \cite{t84}.
\end{example}

\subsection{Influence of the Utility Function on the Reservation Levels}
Here we proceed as in \cite{m00} (Theorem 3.3) in order to study the impact of the risk aversion on the reservation levels. As in section \ref{BRsec:stop} we use the Arrow-Pratt function of absolute risk aversion \eqref{BReq:AP} to measure the risk attitude in the sense that a utility function $U$ is more risk averse than a utility function $W$ if $l_U(x) \ge l_W(x)$ for all $x\in \RR$. In what follows we denote by $x_{n,N}^*(\mu,U)$ the reservation levels which belong to the utility function $U$. Then we obtain

\begin{theorem}\label{BRtheo:risk}
If $U$ is a more risk averse utility function than $W$, then the reservation levels satisfy $x_{n,N}^*(\mu,U)\le x_{n,N}^*(\mu,W)$ for all $n=0,1\ldots, N-1$ and all $\mu\in\PP(E_Y)$.
\end{theorem}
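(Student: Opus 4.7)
The plan is to exploit the characterization of ``more risk averse'' via composition with a concave increasing function, and combine it with the comparison between value functions that was already established in the proof of Theorem \ref{BRtheo:compstoppingrule}. Write $U = r\circ W$ for some increasing concave $r:\RR\to\RR$; then, because the translation $x\mapsto x-nc$ is the same for both utilities, we also have $U_n = r\circ W_n$ for every $n$, and in particular $U_n^{-1} = W_n^{-1}\circ r^{-1}$.

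The key inequality I want is
\[ U_n\bigl(x_{n,N}^*(\mu,U)\bigr) \;\le\; r\Bigl(W_n\bigl(x_{n,N}^*(\mu,W)\bigr)\Bigr), \]
which, since $U_n(y)=r(W_n(y))$ and $r$ is strictly increasing, is equivalent (after applying $U_n^{-1}$) to the desired conclusion $x_{n,N}^*(\mu,U)\le x_{n,N}^*(\mu,W)$. By the definition of the reservation level, $U_n(x_{n,N}^*(\mu,U))$ equals the continuation value
\[ d_{N-n}(\mu,U_n) \;=\; \int_\RR V_{N-n-1}\bigl(x',\Phi(x',\mu),-(n+1)c,U\bigr)\,Q(dx'|\mu), \]
and similarly for $W$. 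Hence it suffices to show
\[ d_{N-n}(\mu,U_n) \;\le\; r\bigl(d_{N-n}(\mu,W_n)\bigr). \]

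To establish this, I would reuse the inductive comparison from the proof of Theorem \ref{BRtheo:compstoppingrule}, namely $V_k(x',\mu',s',U)\le r\circ V_k(x',\mu',s',W)$ for every $k$ and every $(x',\mu',s')$. Plugging this pointwise bound into the integral defining $d_{N-n}(\mu,U_n)$ and then applying Jensen's inequality for the concave function $r$ (which is the point where concavity of $r$ is crucial) yields
\[ d_{N-n}(\mu,U_n) \;\le\; \int_\RR r\bigl(V_{N-n-1}(x',\Phi(x',\mu),-(n+1)c,W)\bigr)\,Q(dx'|\mu) \;\le\; r\bigl(d_{N-n}(\mu,W_n)\bigr), \]
which is exactly what we need. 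Applying $U_n^{-1}$ and using $U_n^{-1}\circ r = W_n^{-1}$ gives $x_{n,N}^*(\mu,U)\le x_{n,N}^*(\mu,W)$.

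The main obstacle, and the only nontrivial step, is the simultaneous use of the pointwise bound on $V_k$ and Jensen's inequality: one must order the arguments carefully so that the concave $r$ can be pulled outside the $Q(dx'|\mu)$-integral after the induction hypothesis has been applied inside. Everything else (the existence of $r$, the identity $U_n=r\circ W_n$, and the monotonicity of $U_n$ used in the final step) is routine once the framework of Theorem \ref{BRtheo:compstoppingrule} is in place, and in fact the whole argument is a direct corollary of that theorem specialised to the equation defining the reservation level.
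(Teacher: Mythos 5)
Your proposal is correct and follows essentially the same route as the paper: the paper's proof is the one-line remark that the result follows from Theorem \ref{BRtheo:compstoppingrule}, and what you have written is precisely that deduction unpacked — reusing the inductive bound $V_k(\cdot,U)\le r\circ V_k(\cdot,W)$ together with Jensen's inequality for the concave $r$, then translating it into the inequality $d_{N-n}(\mu,U_n)\le r\bigl(d_{N-n}(\mu,W_n)\bigr)$ and applying $U_n^{-1}=W_n^{-1}\circ r^{-1}$. The only cosmetic difference is that the paper's intended argument passes through the stopping-set implication $f_n^*(x,\mu,s,W)=1\Rightarrow f_n^*(x,\mu,s,U)=1$ and reads off the ordering of the thresholds, whereas you compare the continuation values directly; these are the same chain of inequalities.
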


\begin{proof}
The proof follows from Theorem \ref{BRtheo:compstoppingrule}.
\end{proof}

\begin{remark}
Theorem \ref{BRtheo:risk} includes as a special case the comparison to the risk neutral stopping problem: Suppose $U$ is an arbitrary increasing concave utility function, then obviously $U=U\circ \rm {id}$. Thus, we choose $r=U$ in this context and see that the reservation levels of a risk neutral decision maker will always be above the reservation levels of a risk averse decision maker.
\end{remark}

\begin{example}\label{BRex:incinga}
In case $U(x) = \frac1\gamma e^{\gamma x}$ with $\gamma <0$ we obtain from Theorem \ref{BRtheo:risk} that the reservation levels $x_n^*(\mu)$ are increasing in $\gamma$.
\end{example}

\subsection{Influence of the Time Horizon on the Reservation Levels}
In a risk neutral setting it is often the case that the reservation levels are decreasing as time goes by, i.e. the decision maker becomes less selective when she approaches the time horizon. However in  \cite{m00} it has been shown that this is no longer true in a risk averse setting. Indeed he gave some examples where the reservation levels are increasing.

Without additional assumptions it is difficult to determine how the reservation levels behave in time.
However in general the reservation levels satisfy the following relation.

\begin{theorem}
For all $n=0,1,\ldots,N-2$ and $\mu\in\PP(E_Y)$ it holds that
$$x_{n,N}^*(\mu)  \ge nc + \rho_U\Big( x_{n+1,N}^*(\Phi(X_1,\mu))-(n+1)c\Big).$$
In case $U(x) = \frac1\gamma e^{\gamma x}$ with $\gamma <0$ we obtain
$$x_{n,N}^*(\mu) +c \ge \rho_U\Big( x_{n+1,N}^*(\Phi(X_1,\mu))\Big).$$
\end{theorem}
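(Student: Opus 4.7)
The plan is to start from the recursion for the reservation levels in Theorem \ref{BRtheo:xrec}(b), drop the $\max$ in the integrand to obtain a lower bound, and then recognize the resulting expectation as the certainty equivalent $\rho_U$. The exponential version will then follow by translation-invariance of $\rho_U$.

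More concretely, using $U_n(x) = U(x - nc)$ and $U_n^{-1}(y) = nc + U^{-1}(y)$, I would rewrite the recursion as
\begin{equation*}
x_{n,N}^*(\mu) = nc + U^{-1}\!\left(\int_\RR U\!\Big(\max\{x, x_{n+1,N}^*(\Phi(x,\mu))\} - (n+1)c\Big)\, Q(dx|\mu)\right).
\end{equation*}
Since $\max\{x, x_{n+1,N}^*(\Phi(x,\mu))\} \ge x_{n+1,N}^*(\Phi(x,\mu))$ and $U$ is increasing, the integrand is bounded below by $U\!\big(x_{n+1,N}^*(\Phi(x,\mu)) - (n+1)c\big)$. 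Because $U^{-1}$ is also increasing, monotonicity of the integral gives
\begin{equation*}
x_{n,N}^*(\mu) \ge nc + U^{-1}\!\left(\int_\RR U\!\Big(x_{n+1,N}^*(\Phi(x,\mu)) - (n+1)c\Big) Q(dx|\mu)\right).
\end{equation*}
Interpreting $X_1$ as a random variable with distribution $Q(\cdot|\mu)$, the bracketed expression is exactly $U\!\circ\!\rho_U\!\big(x_{n+1,N}^*(\Phi(X_1,\mu)) - (n+1)c\big)$ by the very definition of the certainty equivalent, which yields the first inequality.

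For the exponential case $U(x)=\frac{1}{\gamma}e^{\gamma x}$, I would invoke translation-invariance of $\rho_U$ (noted in Section~2): for any constant $m$, $\rho_U(Z + m) = \rho_U(Z) + m$. Applied to $Z = x_{n+1,N}^*(\Phi(X_1,\mu))$ with $m = -(n+1)c$, this gives $\rho_U\!\big(x_{n+1,N}^*(\Phi(X_1,\mu)) - (n+1)c\big) = \rho_U\!\big(x_{n+1,N}^*(\Phi(X_1,\mu))\big) - (n+1)c$. Plugging this into the general inequality and canceling $nc - (n+1)c = -c$ yields the second statement. No real obstacle arises here; the only subtle points are making sure the exponent/power cancellations in converting $U^{-1}\!\int U(\cdot)\,dQ$ into $\rho_U(\cdot)$ are done with $X_1 \sim Q(\cdot|\mu)$ rather than some other distribution, and using translation-invariance exclusively in the exponential step since it fails for general concave $U$.
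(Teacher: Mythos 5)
Your proposal is correct and follows essentially the same route as the paper: drop the $\max$ in the recursion of Theorem \ref{BRtheo:xrec}(b) using monotonicity of $U_{n+1}$ and $U_n^{-1}$, identify the resulting expression as $nc+\rho_U\big(x_{n+1,N}^*(\Phi(X_1,\mu))-(n+1)c\big)$ with $X_1\sim Q(\cdot|\mu)$, and then invoke translation-invariance of the entropic certainty equivalent for the exponential case. The only difference is the cosmetic order of rewriting $U_{n+1}$ in terms of $U$ before versus after discarding the $\max$.
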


\begin{proof}
From Theorem \ref{BRtheo:xrec} b) we obtain
\begin{eqnarray*}
  x_{n,N}^*(\mu) &=&  U_{n}^{-1} \circ \int_\RR U_{n+1}\Big( \max\big\{ x, x_{n+1,N}^*(\Phi(x,\mu))\big\}\Big)  Q(dx|\mu) \\
   &\ge & U_{n}^{-1} \circ \int_\RR U_{n+1}\Big( x_{n+1,N}^*(\Phi(x,\mu))\Big)  Q(dx|\mu) \\
   &=&   nc + U^{-1} \circ \int_\RR U\Big(  x_{n+1,N}^*(\Phi(x,\mu))-(n+1)c\Big)Q(dx|\mu) \big).
\end{eqnarray*}
Using the definition of the certainty equivalent yields the first statement. For the second statement note that $\rho$ is translation invariant in the case of exponential utility.
\end{proof}

In order to obtain decreasing reservation levels further assumptions are necessary. For example the property that the utility function has a {\em decreasing absolute risk aversion} (DARA). This means that $x\mapsto l_U(x)$ is decreasing, i.e. the decision maker becomes more risk averse with decreasing wealth.

\begin{theorem}\label{BRtheo:n}
\begin{itemize}
  \item[a)] The reservation levels $x_{n,N}^*(\mu)$ are increasing in $N$ for all $\mu\in\PP(E_Y)$.
  \item[b)] If the utility function $U$ is DARA, then $x_{n,N}^*(\mu)$ are decreasing in $n$ for all $\mu\in\PP(E_Y)$.
\end{itemize}
\end{theorem}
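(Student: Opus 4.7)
The plan is to dispatch part (a) directly from the Bellman recursion, then deduce part (b) from part (a) combined with Theorem \ref{BRtheo:risk}, after a time-shift identification that reinterprets $x_{n,N}^*(\mu)$ as the time-$0$ reservation level of a shorter house-selling problem with a shifted base utility.

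For part (a), I would unfold the definition
\[
x_{n,N}^*(\mu) = U_n^{-1}\Bigl(\int_\RR V_{N-n-1}\bigl(x', \Phi(x',\mu), -(n+1)c\bigr) Q(dx'|\mu)\Bigr).
\]
The only dependence on $N$ on the right-hand side is through the index $N-n-1$ of $V$, and the Remark following Theorem \ref{BRtheo:Bellman1} gives $V_k \le V_{k+1}$ pointwise. Since $U_n^{-1}$ is increasing, $x_{n,N}^*(\mu)$ is non-decreasing in $N$. Note that this argument uses only the recursion and the monotonicity $V_k \le V_{k+1}$, both of which hold verbatim for any base utility.

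For part (b), write $r_K(\mu,W)$ for the time-$0$ reservation level of the $K$-stage house-selling problem with base utility $W$ and cost $c$. My first step is to establish the time-shift identification
\[
x_{n,N}^*(\mu) = r_{N-n}(\mu, U_n), \qquad x_{n+1,N}^*(\mu) = r_{N-n-1}(\mu, U_{n+1}),
\]
where $U_n(z) = U(z-nc)$. This is a short backward induction on $K$ using the recursion in Theorem \ref{BRtheo:xrec}(b) and the identity $U_n(z-jc) = U_{n+j}(z)$; intuitively, the shift $-nc$ folded into the base utility encodes the cost already paid up to time $n$.

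The DARA hypothesis gives $l_{U_{n+1}}(x) = l_U(x-(n+1)c) \ge l_U(x-nc) = l_{U_n}(x)$, so $U_{n+1}$ is more risk averse than $U_n$. Applying Theorem \ref{BRtheo:risk} to the standalone $(N-n)$-stage problem with base utilities $U_{n+1}$ and $U_n$ yields $r_{N-n}(\mu,U_{n+1}) \le r_{N-n}(\mu,U_n)$; applying part (a), which carries over to any base utility, to the standalone problem with base $U_{n+1}$ gives $r_{N-n-1}(\mu,U_{n+1}) \le r_{N-n}(\mu,U_{n+1})$. Chaining these two inequalities,
\[
x_{n+1,N}^*(\mu) = r_{N-n-1}(\mu,U_{n+1}) \le r_{N-n}(\mu,U_{n+1}) \le r_{N-n}(\mu,U_n) = x_{n,N}^*(\mu).
\]
The main obstacle is writing the identification carefully; everything else reduces to one-line invocations of Theorem \ref{BRtheo:risk} and part (a).
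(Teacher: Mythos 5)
Your proposal is correct and takes essentially the same route as the paper: part (a) follows from the monotonicity $V_k\le V_{k+1}$ (the paper inducts on the reservation-level recursion instead, which is equivalent), and part (b) combines Theorem \ref{BRtheo:risk} applied to the cost-shifted utility with part (a). Your version is in fact slightly more careful than the paper's: the identification there is stated as $x_{n,N}^*(\mu,U_1)=x_{n+1,N}^*(\mu)$, whereas matching the number of remaining stages actually gives $x_{n,N}^*(\mu,U_1)=x_{n+1,N+1}^*(\mu)$, so an appeal to part (a) is genuinely needed to conclude --- exactly the step your chain $r_{N-n-1}(\mu,U_{n+1})\le r_{N-n}(\mu,U_{n+1})\le r_{N-n}(\mu,U_n)$ makes explicit.
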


\begin{proof}
\begin{itemize}
  \item[a)] It is obvious that  $x_{N-1,N}^*$ is increasing in $N$. Now suppose that $ x_{n+1,N}^*$ is increasing in $N$. Then due to the recursion of the reservation levels in Theorem \ref{BRtheo:xrec} and the fact that $U$ is increasing we obtain that $x_{n,N}^*$ is increasing in $N$.
  \item[b)] Now suppose the utility function $U$ is DARA. Thus, in particular $U_1(x)=U(x-c)$ is more risk averse than $U$.  Then we obtain with Theorem \ref{BRtheo:risk} that
      $$ x_{n,N}^*(\mu,U) \ge x_{n,N}^*(\mu,U_1) = x_{n+1,N}^*(\mu)$$
\end{itemize}
which implies the result.
\end{proof}

\begin{example}
In case $U(x) = \frac1\gamma e^{\gamma x}$ with $\gamma <0$ we obtain from Theorem \ref{BRtheo:n} that the reservation levels $x_n^*(\mu)$ are increasing in $n$. This effect can also be seen in the numerical example \ref{BRex:num}.
\end{example}

\section{Risk-sensitive Stopping Problems with Infinite Time Horizon}\label{BRsec:ITH}
Let us now consider the risk-sensitive stopping problem from Section \ref{BRsec:stop} with infinite time horizon. Here we assume that the stopping reward $g$ is bounded, i.e. $\underline{g}\le g\le \bar{g}$ and cost are strictly negative, i.e. $\sup_{x\in E_X} c(x) =: \bar{c} < 0$. Thus we consider
\begin{equation}\label{BReq:Vinfty}
    J_\infty(x) := \sup_{\tau< \infty} \mathbb{E}_{x}\Big[ U\Big( \sum_{k=0}^{\tau-1} c(X_k)+g(X_\tau)\Big)\Big]
\end{equation}
where the supremum is taken over all $(\mathcal{F}_n)$-stopping times $\tau$ with $\PP_x(\tau<\infty)= 1$ for all $x\in E_X$.
This problem can be seen as the limiting problem of stopping problems with bounded horizon.

\begin{theorem}\label{BRtheo:infHor}
The sequence $(V_n)$ of value functions defined in \eqref{BReq:V-def} has a limit $V$ and this limit satisfies
  \begin{equation}\label{BReq:VFP}V(x,\mu,s) = \max\Big\{ U\big(g(x)+s\big), \int_{E_X} V\big(x',\Phi(x,x',\mu), s+c(x)\big) Q(dx'|\mu)\Big\}.\end{equation}
\end{theorem}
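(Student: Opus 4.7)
The plan is to pass to the limit in the finite-horizon Bellman recursion of Theorem \ref{BRtheo:Bellman1}. As observed in the remark following that theorem, the sequence $(V_n(x,\mu,s))_{n\in\NN}$ is monotone increasing in $n$; combining this monotonicity with a uniform upper bound will yield pointwise convergence and, later, justify the interchange of limit and integration.

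For the uniform bound I would exploit the standing assumptions of this section: $\underline g \le g \le \bar g$ and $c(x)\le \bar c<0$ for all $x\in E_X$. Then for every stopping time $\tau$ and every initial pair $(x,y)$ one has almost surely
$$s + \sum_{k=0}^{\tau-1} c(X_k) + g(X_\tau) \;\le\; s + \bar g,$$
and monotonicity of $U$ gives $V_n(x,\mu,s)\le U(s+\bar g)<\infty$. Choosing the admissible stopping time $\tau=0$ yields the lower bound $V_n(x,\mu,s)\ge U(g(x)+s)\ge U(\underline g+s)>-\infty$. Hence the pointwise limit
$$V(x,\mu,s) := \lim_{n\to\infty} V_n(x,\mu,s)$$
exists and is finite, with $U(\underline g + s) \le V(x,\mu,s) \le U(\bar g + s)$.

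It then remains to take $n\to\infty$ in the recursion \eqref{BReq:vn}. The first argument of the max, $U(g(x)+s)$, does not depend on $n$. For the integral term, the integrand
$$x' \longmapsto V_{n-1}\big(x',\Phi(x,x',\mu),s+c(x)\big)$$
is monotone increasing in $n$ with pointwise limit $V(x',\Phi(x,x',\mu),s+c(x))$ and is sandwiched between the constants $U(\underline g + s + c(x))$ and $U(\bar g + s + c(x))$, both of which are integrable with respect to the probability kernel $Q^X(\,\cdot\,|x,\mu)$. Dominated (or equivalently monotone) convergence therefore permits the interchange of limit and integral, and since the maximum of two convergent sequences converges to the maximum of their limits, passing to the limit in \eqref{BReq:vn} produces exactly \eqref{BReq:VFP}.

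The only delicate point is the uniform upper bound, where the assumptions that $g$ is bounded and $\bar c < 0$ enter decisively: without them the accumulated reward could drift to $+\infty$ with $n$ and the dominated-convergence step would fail.
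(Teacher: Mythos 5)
Your proof is correct and follows the same route as the paper: monotonicity of $(V_n)$ in $n$ plus an upper bound coming from $g\le\bar g$ and $c\le\bar c<0$ give pointwise convergence, after which one passes to the limit in the recursion \eqref{BReq:vn} via monotone/dominated convergence. The paper's proof is just a terser version of this; your sandwich bound $U(\underline g+s+c(x))\le V_{n-1}\le U(\bar g+s+c(x))$ makes the interchange of limit and integral explicit, which the paper leaves implicit.
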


\begin{proof}
We have $V_n\le V_{n+1}$ and the sequence $V_n$ is bounded from above by our assumptions, thus the limit $V=\lim_{n\to\infty} V_n$ exists. Moreover, we can take the limit on both sides in the recursion \eqref{BReq:vn} for $V_n$ to obtain the fixed point property of $V$.
\end{proof}

Next we show the relation between $J_\infty$ and $V$.

\begin{theorem}\label{BRtheo:infHor2}
\begin{itemize}
  \item[a)] It holds that $V(x,Q_0,0) = J_\infty(x)$.
   \item[b)] Let $f^*(x,\mu,s) =1$  if the maximum in \eqref{BReq:VFP} is attained at $U\big(g(x)+s\big)$ and define $(g_0^*,g_1^*\ldots)$ by
 $$ g_n^*(h_n) := f^*\Big( x_n, \mu_n(\cdot|h_n), \sum_{k=0}^{n-1} c(x_k)\Big),\quad n\in \NN_0.$$
 Then the optimal stopping time $\tau^*$ is given by
 $$ \tau^* := \inf\{ n\in\NN_0 : g_n^*(h_n) = 1\}.$$
\end{itemize}
\end{theorem}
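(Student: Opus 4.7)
My plan is to prove (a) by establishing the two inequalities $V(x,Q_0,0)\le J_\infty(x)$ and $\ge$, and to derive (b) from a martingale argument applied to $Z_n:=V(X_n,\mu_n,S_n)$, where $S_n:=\sum_{k=0}^{n-1}c(X_k)$. The direction $V(x,Q_0,0)\le J_\infty(x)$ is immediate: Theorem~\ref{BRtheo:Bellman1} gives $V_N(x,Q_0,0)=J_N(x)$, and any $\tau\le N$ satisfies $\PP_x(\tau<\infty)=1$, so $J_N(x)\le J_\infty(x)$; passing $N\to\infty$ via Theorem~\ref{BRtheo:infHor} yields the claim. For the reverse, I would truncate $\tau_N:=\tau\wedge N$ for any admissible $\tau$, so that $\mathbb{E}_x[U(R(\tau_N))]\le V_N(x,Q_0,0)\le V(x,Q_0,0)$ with $R(\tau):=\sum_{k=0}^{\tau-1}c(X_k)+g(X_\tau)$. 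The crucial step is to verify the limit passage, using the decomposition $\mathbb{E}_x[U(R(\tau_N))]=\mathbb{E}_x[U(R(\tau));\tau\le N]+\mathbb{E}_x[U(R(N));\tau>N]$: the first term converges to $\mathbb{E}_x[U(R(\tau))]$ by dominated convergence (the only non-trivial case being $\mathbb{E}_x[U(R(\tau))]>-\infty$), while the second is controlled by the upper bound $U(\bar{g})$ (valid because $c\le\bar{c}<0$ and $g\le\bar{g}$ force $R(\tau_N)\le\bar{g}$) together with $\PP_x(\tau>N)\to 0$.

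For part (b), the fixed-point equation~\eqref{BReq:VFP} combined with the definition of $f^*$ gives $Z_n=\mathbb{E}_x[Z_{n+1}\mid\mathcal{F}_n]$ on $\{\tau^*>n\}$. A routine conditional-expectation decomposition then shows that $(Z_{n\wedge\tau^*})$ is a martingale, hence $\mathbb{E}_x[Z_{n\wedge\tau^*}]=V(x,Q_0,0)$ for every $n$. I would then rule out $\PP_x(\tau^*=\infty)>0$: on $\{\tau^*>n\}$, $S_n\le n\bar{c}\to -\infty$, so $Z_n\le U(\bar{g}+S_n)\to -\infty$; reverse Fatou applied to the uniformly-bounded-above sequence $Z_{n\wedge\tau^*}\le U(\bar{g})$ would give $V(x,Q_0,0)=\limsup_n\mathbb{E}_x[Z_{n\wedge\tau^*}]\le\mathbb{E}_x[\limsup_n Z_{n\wedge\tau^*}]=-\infty$, contradicting $V(x,Q_0,0)\ge U(g(x))>-\infty$. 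Once $\tau^*<\infty$ a.s., $Z_{n\wedge\tau^*}\to Z_{\tau^*}=U(R(\tau^*))$ by the definition of $\tau^*$, and a second application of reverse Fatou yields $V(x,Q_0,0)\le\mathbb{E}_x[U(R(\tau^*))]$; combined with (a), this forces $\mathbb{E}_x[U(R(\tau^*))]=J_\infty(x)$, so $\tau^*$ is optimal.

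The hard part throughout is the one-sided integrability: the bound $U(\cdot)\le U(\bar{g})$ is freely available from $c<0$ and $g\le\bar{g}$, but $U$ need not be bounded below, so both the limit passage in (a) and the a.s.\ finiteness of $\tau^*$ in (b) must be handled via reverse Fatou plus a separate analysis of the excursion events $\{\tau>N\}$ and $\{\tau^*=\infty\}$; no symmetric lower-bound tool is available.
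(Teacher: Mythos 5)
Your part (b) is sound: the martingale formulation of $(Z_{n\wedge\tau^*})$ is exactly the paper's $n$-fold iteration of the fixed point equation in disguise, and your contradiction argument for $\PP_x(\tau^*<\infty)=1$ (reverse Fatou on the sequence bounded above by $U(\bar g)$, using $Z_n\le U(\bar g+n\bar c)\to-\infty$ on $\{\tau^*>n\}$) is a legitimate qualitative variant of the paper's explicit estimate $\PP_x(\tau^*>n)\le (U(g(x))-U(\bar g))/a_n$; here reverse Fatou is indeed the right one-sided tool, since you only need $\limsup_n\mathbb{E}_x[Z_{n\wedge\tau^*}]\le\mathbb{E}_x[U(R(\tau^*))]$. (You should still record why $V(x,\mu,s)\le U(\bar g+s)$, e.g.\ by extending part (a) to arbitrary initial states or by induction on $V_n$, as the paper does.)

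Part (a), however, has a genuine gap in the reverse inequality. You need $\limsup_N\big(\mathbb{E}_x[U(R(\tau));\tau\le N]+\mathbb{E}_x[U(R(N));\tau>N]\big)\ge\mathbb{E}_x[U(R(\tau))]$, i.e.\ a \emph{lower} bound $\liminf_N\mathbb{E}_x[U(R(N));\tau>N]\ge 0$ on the tail term. The bound you invoke, $U(R(N))\le U(\bar g)$, controls the wrong side: it only yields $\limsup_N\mathbb{E}_x[U(R(N));\tau>N]\le 0$, while on $\{\tau>N\}$ one has $U(R(N))\le U(N\bar c+\bar g)\to-\infty$, so the tail term can a priori have strictly negative $\limsup$ (take $U(x)=x$, $c\equiv-1$, $g\equiv0$ and $\PP_x(\tau>N)\sim 1/N$: the term tends to $-1$). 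Since $U$ is unbounded below, $\PP_x(\tau>N)\to0$ alone does not kill this term, and as you yourself note no symmetric lower-bound tool is "freely" available — which is precisely why your argument does not close. The paper's fix is to use $c<0$ \emph{before} splitting: since $\sum_{k=0}^{(\tau\wedge N)-1}c(X_k)\ge\sum_{k=0}^{\tau-1}c(X_k)$,
\begin{equation*}
U\big(R(\tau\wedge N)\big)\;\ge\;U\Big(\sum_{k=0}^{\tau-1}c(X_k)+g(X_{\tau\wedge N})\Big),
\end{equation*}
and the right-hand side converges a.s.\ to $U(R(\tau))$ (as $\tau<\infty$ a.s.) while being sandwiched between the $N$-independent random variable $U\big(\sum_{k=0}^{\tau-1}c(X_k)+\underline g\big)$ and the constant $U(\bar g)$; under the integrability restriction on $\tau$ this permits dominated convergence in the correct direction. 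You should replace your splitting by this pointwise minorization (or, equivalently, supply an $N$-independent integrable minorant for $U(R(N))1_{[\tau>N]}$ and apply ordinary Fatou).
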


\begin{proof}
\begin{itemize}
  \item[a)] First note that we have $V_n(x,Q_0,0) \le J_\infty(x)$ for all $n\in\NN$ which implies that $V(x,Q_0,0) \le J_\infty(x)$. Next for every admissible stopping time $\tau$ with $\PP_x(\tau<\infty)=1$ and $\mathbb{E}_{x}\Big[ |U\Big( \sum_{k=0}^{\tau-1} c(X_k)+g(X_\tau)\Big)|\Big] <\infty$  it holds for any $n\in\NN$ that
 \begin{eqnarray*} &&V(x,Q_0,0) \ge V_n(x,Q_0,0)\ge \\
 &\ge&\mathbb{E}_{x}\Big[ U\Big( \sum_{k=0}^{(\tau\wedge n)-1} c(X_k)+g(X_{\tau\wedge n})\Big)\Big]\\
 &\ge & \mathbb{E}_{x}\Big[ U\Big( \sum_{k=0}^{\tau -1} c(X_k)+g(X_{\tau\wedge n})\Big)\Big]\\
 &=&   \mathbb{E}_{x}\Big[ U\Big( \sum_{k=0}^{\tau -1} c(X_k)+g(X_{\tau})\Big)1_{[\tau \le n]}\Big]  + \mathbb{E}_{x}\Big[ U\Big( \sum_{k=0}^{\tau -1} c(X_k)+g(X_{n})\Big)1_{[\tau > n]}\Big]
 \end{eqnarray*}
  Then letting $n\to \infty$ yields with dominated convergence and the fact that $\PP_x(\tau<\infty)=1$
 $$ V(x,Q_0,0) \ge \mathbb{E}_{x}\Big[ U\Big( \sum_{k=0}^{\tau -1} c(X_k)+g(X_{\tau})\Big)\Big].$$
 Taking the supremum over all admissible stopping times and combining the result with the first inequality implies the result.
  \item[b)] Iterating the fixed point equation $n$-times and using the definition of $\tau^*$ we obtain
  \begin{eqnarray}\nonumber V(x,Q_0,0) &=& \mathbb{E}_{x}\Big[ U\Big(\sum_{k=0}^{\tau^*-1} c(X_k)+g(X_{\tau^*}) \Big) 1_{[\tau^*\le n]}\Big] +\\
  \label{BReq:xxx}&+& \mathbb{E}_{x}\Big[ V\Big(X_n,\mu_n(\cdot|H_n), \sum_{k=0}^{n-1} c(X_k)\Big)  1_{[\tau^*> n]}\Big].\end{eqnarray}
 First we show that $ \PP_x(\tau^*<\infty) =1$ for all $x\in E_X$. We can extend part a) easily to arbitrary states $(x,\mu,s)$, i.e.
 $$ V(x,\mu,s) = \sup_{\tau< \infty} \int_{E_Y} \mathbb{E}_{xy}\Big[ U\Big( \sum_{k=0}^{\tau-1} c(X_k)+g(X_\tau)+s\Big)\Big] \mu(dy).$$
 Then we obtain
 $$ V\Big(x_n,\mu_n(\cdot|h_n), \sum_{k=0}^{n-1} c(x_k)\Big)  \le U\big( n\bar{c} + \bar{g}\big)$$
 and from \eqref{BReq:xxx}
 \begin{eqnarray*}
   U\big(g(x)\big) &\le& V(x,Q_0,0)\le  \\
    &\le& U(\bar{g}) \PP_x(\tau^*\le n) + U(n\bar{c}+\bar{g}) \PP_x(\tau^* > n),
 \end{eqnarray*}
 i.e.
 \begin{eqnarray*}
   U\big(g(x)\big) &\le&  U(\bar{g}) + \Big(  U(n\bar{c}+\bar{g})-U(\bar{g})\Big) \PP_x(\tau^* > n)  \\
    &=& U(\bar{g}) + a_n \PP_x(\tau^* > n)
 \end{eqnarray*}
 where $a_n := U(n\bar{c}+\bar{g})-U(\bar{g})$.
 It holds that $a_n < 0$ for all $n\in\NN$ large enough and $\lim_{n\to\infty} a_n= -\infty$.
Hence in total
 $$ \PP_x(\tau^*>n) \le \frac{U\big(g(x)\big)-U(\bar{g})}{a_n}$$
for all $n\in\NN$ large enough which implies $\PP_x(\tau^*<\infty)=1$.
Since $V$ is bounded from above, letting $n\to\infty$ in \eqref{BReq:xxx} implies that
  $$ J_\infty(x)=V(x,Q_0,0)  \le  \mathbb{E}_{x}\Big[ U\Big(\sum_{k=0}^{\tau^*-1} c(X_k)+g(X_{\tau^*}) \Big) \Big]$$
  and $\tau^*$ is optimal.
\end{itemize}
\end{proof}

\subsection{Risk-Sensitive Bayesian House Selling Problem}
Let us consider the risk-sensitive Bayesian house selling problem with infinite time horizon.When we assume that the offers are bounded, i.e. $X_i\in[{m},{M}]$ our general assumptions of this section are satisfied.
As in the finite horizon case we can see from the fixed point equation that the optimal stopping time is characterized by {\em reservation levels.} Indeed Theorem \ref{BRtheo:infHor} and Theorem \ref{BRtheo:infHor2} apply directly and the fixed point equation reads
\begin{equation*}
    V(x,\mu,s) = \max\Big\{ U(x+s), \int_{E_X} V\big(x',\Phi(x',\mu), s-c\big) Q(dx'|\mu)\Big\}.
\end{equation*}
Then we obtain
\begin{eqnarray*}
    g_n^*(h_n) =1 \quad \Leftrightarrow \quad x_n &\ge& U_n^{-1}\Big( \int V\big(x',\Phi(x',\mu), -c(n+1)\big) Q(dx'|\mu)\Big)\\
     & =: & x_{n,\infty}^*(\mu).
\end{eqnarray*}
Note that the reservation levels in general still depend on the time stage, in contrast to the problem in the risk neutral setting because we have to memorize the cost which has accumulated so far. As for the case of finite time horizon we obtain a similar recursion for the reservation levels.

\begin{theorem}\label{BRtheo:xrecih}
\begin{itemize}
  \item[a)] The optimal stopping time for the Bayesian house selling problem with infinite time horizon is given by
$$\tau^* = \inf\big\{ n\in\NN_0 : X_n \ge x_{n,\infty}^*(\mu_n(\cdot|h_n))\big\}.$$
  \item[b)] It holds that $x_{n,\infty}^*(\mu) = \lim_{N\to\infty} x_{n,N}^*(\mu)$ and the reservation levels satisfy the following recursion
  \begin{eqnarray*}
     x_{n,\infty}^*(\mu) &=&  U_{n}^{-1} \circ \int_\RR U_{n+1}\Big( \max\big\{ x, x_{n+1,\infty}^*(\Phi(x,\mu))\big\}\Big)  Q(dx|\mu).
  \end{eqnarray*}
\end{itemize}
\end{theorem}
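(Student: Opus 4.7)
The plan is to import the general infinite-horizon result Theorem \ref{BRtheo:infHor2} and specialize it to the i.i.d.\ Bayesian house-selling setup, and then read the recursion for $x_{n,\infty}^*$ off the fixed-point equation \eqref{BReq:VFP}. For part (a), I would first observe that in this model $g(x)=x$, $c(x)=-c$, so along any trajectory the accumulated cost is $\sum_{k=0}^{n-1} c(X_k)=-nc$; and since offers are conditionally i.i.d.\ given $\theta$, the updating operator reduces to $\Phi(x',\mu)$ and $Q^X(dx'|x,\mu)=Q(dx'|\mu)$. Substituting these into the stopping criterion $f^*(x_n,\mu_n,-nc)=1$ supplied by Theorem \ref{BRtheo:infHor2}(b), the decisive inequality $U(x_n-nc)\ge \int V\bigl(x',\Phi(x',\mu_n),-(n+1)c\bigr)\,Q(dx'|\mu_n)$ becomes, after applying the increasing map $U_n^{-1}$, precisely $x_n\ge x_{n,\infty}^*(\mu_n(\cdot|h_n))$. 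This identifies $\tau^*$ with the hitting time in (a); its almost sure finiteness is built into Theorem \ref{BRtheo:infHor2}(b) under the standing assumption $\bar{c}<0$.

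For part (b), I would split the claim into the convergence statement and the recursion. By definition
\[
x_{n,N}^*(\mu)=U_n^{-1}\Bigl(\int V_{N-n-1}(x',\Phi(x',\mu),-(n+1)c)\,Q(dx'|\mu)\Bigr),
\]
and Theorem \ref{BRtheo:infHor} gives $V_{N-n-1}\uparrow V$ pointwise. Because $g$ is bounded, the sequence $V_N$ is uniformly bounded, so monotone (equivalently dominated) convergence yields convergence of the integrals to $\int V(x',\Phi(x',\mu),-(n+1)c)\,Q(dx'|\mu)$. Continuity of $U_n^{-1}$ (it is strictly increasing because $U$ is) then delivers $x_{n,N}^*(\mu)\to x_{n,\infty}^*(\mu)$.

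For the recursion, I would apply the fixed-point equation \eqref{BReq:VFP} to the integrand defining $x_{n,\infty}^*(\mu)$: one level of unfolding gives
\[
V(x',\Phi(x',\mu),-(n+1)c)=\max\Bigl\{U_{n+1}(x'),\ \int V(x'',\Phi(x'',\Phi(x',\mu)),-(n+2)c)\,Q(dx''|\Phi(x',\mu))\Bigr\},
\]
whose second entry is, by the very definition of $x_{n+1,\infty}^*$, equal to $U_{n+1}\bigl(x_{n+1,\infty}^*(\Phi(x',\mu))\bigr)$. Pulling the increasing function $U_{n+1}$ out of the max and then applying $U_n^{-1}$ after integration reproduces the stated recursion. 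The only point requiring some care is the bookkeeping of the accumulated-cost variable $s$ when iterating the fixed-point equation, i.e.\ making sure the shift from $-(n+1)c$ to $-(n+2)c$ is tracked correctly; I do not expect any genuine analytic obstacle beyond this, as the heavy lifting has already been done in Theorems \ref{BRtheo:infHor} and \ref{BRtheo:infHor2}.
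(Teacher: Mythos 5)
Your proposal is correct and follows exactly the route the paper intends: the theorem is stated without proof, being the direct specialization of Theorems \ref{BRtheo:infHor} and \ref{BRtheo:infHor2} to the house-selling model, combined with one unfolding of the fixed-point equation \eqref{BReq:VFP} just as in the finite-horizon Theorem \ref{BRtheo:xrec}. Your additional care on the convergence step (monotone convergence of $V_{N-n-1}\uparrow V$ plus continuity of $U_n^{-1}$) is the right way to justify $x_{n,\infty}^*(\mu)=\lim_{N\to\infty}x_{n,N}^*(\mu)$, which the paper leaves implicit.
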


\begin{example}
In case $U(x) = \frac1\gamma e^{\gamma x}$ with $\gamma <0$ the recursion for the reservation levels simplifies to a fixed point equation
\begin{eqnarray*}
    x_{\infty}^*(\mu) &=&  -c+ \frac1\gamma \ln\Big( \int_\RR e^{\gamma \max\big\{ x, x_{\infty}^*(\Phi(x,\mu))\big\}}  Q(dx|\mu)\Big).
  \end{eqnarray*}
  Moreover, it holds $x_\infty^*(\mu)= \lim_{n\to\infty}x_n^*(\mu)$. From Example \ref{BRex:incinga} it follows that the reservation levels $x_{\infty}^*(\mu)$ are increasing in $\gamma$.
\end{example}

Also it is possible to discuss the influence of the attitude towards risk as before. Here we use the notation $x_{n,\infty}^*(\mu,U)$ when the reservation level belongs to utility function $U$.

\begin{theorem}
\begin{itemize}
  \item[a)] Suppose that utility function $U$ is more risk averse than utility function $W$. Then for all $n\in\NN$ we obtain that $x_{n,\infty}^*(\mu,U)\le x_{n,\infty}^*(\mu,W).$
  \item[b)] If the utility function $U$ is DARA, then $x_{n,\infty}^*(\mu)$ is decreasing in $n$.
\end{itemize}
\end{theorem}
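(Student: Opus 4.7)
The plan is to deduce both assertions from their finite-horizon counterparts already established in Section~\ref{BRsec:house}, by passing to the limit $N\to\infty$. The crucial bridge is Theorem~\ref{BRtheo:xrecih}(b), which asserts that for each fixed $n\in\NN_0$ and each $\mu\in\PP(E_Y)$,
\[
x_{n,\infty}^*(\mu) \;=\; \lim_{N\to\infty} x_{n,N}^*(\mu).
\]

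For part (a), I would fix $n\in\NN_0$ and $\mu\in\PP(E_Y)$ and apply Theorem~\ref{BRtheo:risk} to every $N>n$ with the two utility functions $U$ and $W$. This yields $x_{n,N}^*(\mu,U)\le x_{n,N}^*(\mu,W)$, uniformly in $N$. Letting $N\to\infty$ on both sides and invoking Theorem~\ref{BRtheo:xrecih}(b) separately for $U$ and for $W$ (both of which satisfy the standing boundedness/cost assumptions of this section), the claimed inequality $x_{n,\infty}^*(\mu,U)\le x_{n,\infty}^*(\mu,W)$ follows.

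For part (b), the same strategy applies: Theorem~\ref{BRtheo:n}(b) furnishes $x_{n,N}^*(\mu)\ge x_{n+1,N}^*(\mu)$ for every $N\ge n+2$ whenever $U$ is DARA, and this inequality is preserved under the pointwise limit $N\to\infty$, so Theorem~\ref{BRtheo:xrecih}(b) yields $x_{n,\infty}^*(\mu)\ge x_{n+1,\infty}^*(\mu)$ for every $n\in\NN_0$.

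The main subtlety is not in the limiting arguments themselves, but in the use of Theorem~\ref{BRtheo:xrecih}(b): convergence of $x_{n,N}^*(\mu)$ as $N\to\infty$ rests on monotonicity of $V_n$ in $n$ (observed in the remark after Theorem~\ref{BRtheo:Bellman1}) together with the upper bound $V_n\le U(\bar g)$ coming from the boundedness of the stopping reward and strict negativity of the running cost. Once that convergence is granted, both assertions of the present theorem reduce to transferring the already established finite-horizon monotonicities—one in the utility function, one in time—through the pointwise limit in $N$, and no further structural work is needed.
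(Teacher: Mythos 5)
Your argument is correct, but it takes a different route from the paper's. You pass the finite-horizon \emph{reservation levels} to the limit: you invoke Theorem~\ref{BRtheo:risk} (resp.\ Theorem~\ref{BRtheo:n}(b)) for every finite $N$ and then use the convergence $x_{n,N}^*(\mu)\to x_{n,\infty}^*(\mu)$ from Theorem~\ref{BRtheo:xrecih}(b) to preserve the weak inequalities. The paper instead passes the \emph{value-function domination} to the limit: it first shows, as in the proof of Theorem~\ref{BRtheo:compstoppingrule}, that $V_n(\cdot,U)\le r\circ V_n(\cdot,W)$ with $U=r\circ W$, lets $n\to\infty$ to get $V(\cdot,U)\le r\circ V(\cdot,W)$, and then extracts the ordering of the continuation values (hence of the reservation levels) directly from the infinite-horizon fixed-point equation; part (b) is then obtained from part (a) via the DARA shift ($U_1=U(\cdot-c)$ is more risk averse than $U$, and $x_{n,\infty}^*(\mu,U_1)=x_{n+1,\infty}^*(\mu,U)$), rather than by taking the limit of Theorem~\ref{BRtheo:n}(b) as you do. The trade-off: your proof is shorter and works entirely at the level of the quantities being compared, but it leans on the convergence statement of Theorem~\ref{BRtheo:xrecih}(b), which the paper states without proof and which is the only genuinely delicate point (it requires $V_N\uparrow V$ plus continuity of $U_n^{-1}$ to move the limit inside); you rightly flag this and sketch why it holds. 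The paper's route only needs the monotone convergence $V_n\uparrow V$ already established in Theorem~\ref{BRtheo:infHor} and so is self-contained at the cost of redoing the comparison argument on the limiting fixed point. Both are valid; no gap.
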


\begin{proof}
For part a) we proceed as in the proof of Theorem \ref{BRtheo:risk} and show first that $V_n(x,\mu,s,z,U) \le r\circ V_n(x,\mu,s,z,W)$ for $n\in\NN$ where $r$ is such that $U=r\circ W$. Taking the limit yields $V(x,\mu,s,z,U) \le r\circ V(x,\mu,s,z,W)$ and the statement follows as in the proof of Theorem \ref{BRtheo:risk}. Part b) follows from part a).
\end{proof}

\section{Conclusion} \label{BRs10}
We have seen that the theory for partially observable risk-sensitive stopping problems is only slightly more complicated than the theory for the risk neutral case. We were also able to show in general that the more risk averse a decision maker is, the later she will stop. Though the numerical algorithms are more demanding than in the risk neutral case, the setting with exponential utility is still feasible.

\end{document}